\newtheorem{theorem}{Theorem}
\newtheorem{lemma}[theorem]{Lemma}
\newtheorem{theo}{Theorem}
\newtheorem{pro}[theo]{Proposition}
\newtheorem{ex}{Example}
\newcommand{\EE}{\mathbb{E}}
\newcommand{\R}{\mathbb{R}}
\newcommand{\Q}{\mathbb{Q}}
\newcommand{\Z}{\mathbb{Z}}
\newcommand{\N}{\mathbb{N}}
\newcommand{\PP}{\mathbb{P}}
\newcommand{\la} {\lambda}
\newcommand{\al}{\alpha}
\newcommand{\ga}{\gamma}
\newcommand{\La}{\Lambda}
\newcommand{\eps}{\varepsilon}
\newcommand{\won}{{\boldsymbol 1}}
\newcounter{constante}
\newcommand{\con}[1]{
\immediate\write 1{\noexpand\newlabel{#1}{{\theconstante}{\theconstante}}}
c_{\theconstante}
\stepcounter{constante}
}
\begin{document}

\setcounter{page}{1}

\title[Lyapunov exponents] {
Lyapunov exponents of Green's functions for
random potentials tending to zero}

\author{Elena Kosygina, Thomas S.\ Mountford, Martin P.W.\ Zerner} 

\thanks{\textit{2000
Mathematics Subject Classification.} 60K37, 82B41, 82B44. }
\thanks{\textit{Key words:}\quad Annealed, Green's function, Lyapunov exponent, quenched, random potential, random walk. }

\begin{abstract}
We consider quenched and annealed Lyapunov exponents for the Green's function of $-\Delta+\ga V$, where the potentials $V(x),\ x\in\Z^d$, are i.i.d.\ nonnegative random variables and  $\ga>0$ is a scalar. We present a probabilistic proof that both Lyapunov exponents scale like $c\sqrt{\ga}$ as $\ga$ tends to 0. Here the constant $c$ is the same for the quenched as for the annealed exponent and is computed explicitly.  This improves results obtained previously by Wei-Min Wang. We also consider other ways to send the potential to zero than multiplying it by a small number.
\end{abstract}
\maketitle

\section{Introduction, results, and examples}
We consider the symmetric,  nearest-neighbor random walk $(S(n))_{n\ge 0}$ in discrete time on $\Z^d,\ d\ge 1,$ which starts at 0. The probability measure and expectation operator of the underlying probability space are denoted by $P$ and $E$ respectively. The random walk evolves in a random potential $V=(V(x))_{x\in\Z^d}$ consisting of i.i.d.\ non-negative random variables $V(x), x\in\Z^d,$ which are defined on a different  probability space with probability measure $\PP$ and expectation operator $\EE$.
To avoid trivialities we assume that $\PP[V(0)>0]>0$.
Given a potential $V$ and $y\in\Z^d$ we define the random walk's Green's function of $0$ and $y$ as
\begin{equation}\label{gr}
g(0,y,V):=\sum_{m\ge 0}E\left[\exp\left(-\sum_{n=0}^{m}V(S(n))\right)\cdot\won_{S(m)=y}\right].
\end{equation}
This function has the following well-known interpretation, see e.g.\ \cite[pp.\ 249]{Ze98}:
If each visit to a vertex $x$ with potential $V(x)$ ``kills'' the walk with probability $1-e^{-V(x)}$ then $g(0,y,V)$ is the expected number of visits of the random walk to $y$ before the walk is killed. 

Closely related to the random walk's Green's function $g$ is the operator's Green's function $G$ of $-\Delta+V$ which is defined as the unique bounded solution of 
\[(-\Delta+V)G(0,y,V)=\delta_{0,y},\]
where the discrete Laplacian is given by $\Delta f(y):=\left(\sum_{|e|=1} f(y+e) \right) / (2d) - f(y)$. In fact, there is a one-to-one correspondence between these two functions, namely
\begin{equation}\label{UV}
G(0,y,V)=g(0,y,\ln (V+1)),
\end{equation}
see e.g.\ \cite[Proposition 2]{Ze98}.  
Consequently, it suffices to study either $g$ or $G$. We choose to study $g$
and are interested in the exponential rate of decay of $g(0,y,V)$ as $|y|\to\infty$. This was  investigated in 
\cite{Ze98}. There the  function  
\begin{eqnarray*}
e(0,y,V)&:=&E\left[\exp\left(-\sum_{n=0}^{H(y)-1}V(S(n))\right)\cdot\won_{H(y)<\infty}\right]
\end{eqnarray*}
was introduced, where $H(y):=\inf\{n\ge 0\mid S(n)=y\}$ is the first passage time of the random walk through $y$. The quantity $e(0,y,V)$ can be interpreted as the  probability that the random walk reaches $y$ before being killed. For the most part the following result is contained in \cite{Ze98}. We shall comment on it in the appendix. 
\begin{pro}  \label{zp}
Assume $\EE[V(0)]<\infty$.  Then 
there is a non-random norm
$\al_V$ on $\R^d$, the so-called  quenched Lyapunov exponent, such that $\PP$-a.s.\ for all $\ell\in\Z^d$,
\begin{eqnarray}\label{do}
\al_V(\ell)
&=&{\displaystyle \lim_{k\to\infty}\frac{-\ln e(0,k\ell,V)}{k}}
\ =\ {\displaystyle \lim_{k\to\infty}\frac{-\EE\left[\ln e(0,k\ell,V)\right]}{k}}\\
&=&{\displaystyle \lim_{k\to\infty}\frac{-\ln g(0,k\ell,V)}{k}}
\ =\ {\displaystyle \lim_{k\to\infty}\frac{-\EE\left[\ln g(0,k\ell,V)\right]}{k}.}
\label{do2}
\end{eqnarray}
The norm $\al_V$ is invariant under the isometries of $\Z^d$ which preserve 0. Moreover, if  the potential $V$ is more variable than another i.i.d.\ potential $W=(W(x))_{x\in\Z^d}$, i.e.\ if $E[h(V(0))]\le E[h(W(0))]$ for all increasing and concave functions $h:\R\to\R$, then $\al_V\le \al_W$.
\end{pro}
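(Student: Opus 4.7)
\textit{Proof proposal.}
The plan is to derive the a.s.\ and $L^1$ limits in (\ref{do}) from Kingman's subadditive ergodic theorem, then bootstrap to (\ref{do2}), the norm properties, and the monotonicity statement. Decomposing on the event $\{H(y)<H(y+z)\}$ and applying the strong Markov property at $H(y)$ yields the supermultiplicativity
\[ e(0,y+z,V) \;\ge\; e(0,y,V)\, e^{-V(y)}\, e(y,y+z,V), \]
so $a(0,z,V):=-\ln e(0,z,V)$ is subadditive up to the additive cocycle $V(y)$. Since $V$ is i.i.d., the family $\{a(y,y+z,V)+V(y)\}$ is stationary and ergodic under $\Z^d$-translations, and comparison of $e(0,\ell,V)$ with a single nearest-neighbour path from $0$ to $\ell$ yields the integrability bound $\EE\,a(0,\ell,V)\le|\ell|_1\bigl(\ln(2d)+\EE\,V(0)\bigr)<\infty$. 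Kingman's theorem then produces a deterministic $\al_V(\ell)$ realizing both the $\PP$-a.s.\ and $L^1$ limits asserted in (\ref{do}).

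To pass from $e$ to $g$ and prove (\ref{do2}), apply the strong Markov property at $H(k\ell)$ to factor
\[ g(0,k\ell,V) \;=\; e(0,k\ell,V)\, g(k\ell,k\ell,V), \]
with $g(k\ell,k\ell,V)\ge 1$ giving one direction trivially. For the other, it suffices to show $\ln g(k\ell,k\ell,V)/k\to 0$ $\PP$-a.s., which follows from stationarity of $g(y,y,V)$ in $y$ combined with integrability of $\ln g(0,0,V)$ and Borel--Cantelli. For the norm properties on $\Z^d$: homogeneity is built into the definition, subadditivity descends from the almost-subadditive structure of $a$, and invariance under isometries of $\Z^d$ fixing $0$ reflects the invariance of the law of $V$. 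Strict positivity $\al_V(\ell)>0$ for $\ell\neq 0$ is obtained from the annealed estimate
\[ \EE[e(0,\ell,V)] \;=\; E\Bigl[\prod_x \EE\bigl[e^{-L_x V(0)}\bigr]\,\won_{H(\ell)<\infty}\Bigr] \;\le\; e^{-\beta|\ell|_\infty}, \]
where $L_x$ denotes the occupation time of $x$ before $H(\ell)$, $\beta:=-\ln\EE[e^{-V(0)}]>0$ (positive by $\PP[V(0)>0]>0$), and a walk reaching $\ell$ visits at least $|\ell|_\infty$ distinct sites; by Jensen this gives $\al_V(\ell)\ge\beta|\ell|_\infty$, so $\al_V$ extends uniquely and continuously to a norm on $\R^d$.

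For the monotonicity in the increasing concave order, the key observation is that
\[ e(0,y,V) \;=\; E\bigl[\exp(-\langle V,L\rangle)\,\won_{H(y)<\infty}\bigr], \]
where $L=(L_x)_{x\in\Z^d}$ is the occupation field of the walk before reaching $y$, exhibits $V\mapsto \ln e(0,y,V)$ as the logarithm of the Laplace transform of a finite positive measure on $\R_+^{\Z^d}$, hence as a convex function of $V$. Since $e$ is decreasing in each $V(x)$, the map $V\mapsto -\ln e(0,y,V)$ is coordinatewise increasing and concave. Applying the hypothesis coordinate by coordinate and extending to the full product via independence yields $\EE[-\ln e(0,k\ell,V)]\le\EE[-\ln e(0,k\ell,W)]$; dividing by $k$ and letting $k\to\infty$ gives $\al_V(\ell)\le\al_W(\ell)$.

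The main obstacle I anticipate is the integrability of $\ln g(0,0,V)$ required in the bootstrap from $e$ to $g$, particularly in low dimensions $d\in\{1,2\}$: here $g(0,0,V)$ is finite only because the walk is killed almost surely, and quantifying how large $g(0,0,V)$ can be when $V$ happens to vanish on a large neighbourhood of $0$ requires a careful argument leveraging $\PP[V(0)>0]>0$, perhaps via a percolation-type bound on the first-passage time of the walk to a site of positive potential.
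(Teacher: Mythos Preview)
Your outline is essentially the paper's approach: the paper defers everything except (\ref{do2}) to \cite[Proposition~4]{Ze98}, whose proof proceeds exactly via the subadditive ergodic theorem, the factorisation $g=e\cdot g(k\ell,k\ell,V)$, and the convexity/monotonicity argument for the concave ordering that you sketch. Your identification of the integrability of $\ln g(0,0,V)$ as the one nontrivial point is spot on; this is precisely \cite[Lemma~5]{Ze98}, which the paper invokes.

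One small slip: the claim $g(k\ell,k\ell,V)\ge 1$ is false in general. From the $m=0$ term in the definition of $g$ you only get $g(k\ell,k\ell,V)\ge e^{-V(k\ell)}$, and if $V$ is large this can be strictly below $1$. The paper records this as the inequality
\[
|\ln g(0,k\ell,V)-\ln e(0,k\ell,V)|\le V(k\ell)+\bigl(V(k\ell)+\ln g(k\ell,k\ell,V)\bigr),
\]
and then uses $\EE[V(0)]<\infty$ together with the integrability of $\ln g(0,0,V)$ to kill both error terms after dividing by $k$, for the a.s.\ and the $L^1$ statements alike. Your argument goes through once you replace ``$\ge 1$'' by ``$\ge e^{-V(k\ell)}$'' and note that $V(k\ell)/k\to 0$ a.s.\ (Borel--Cantelli from $\EE[V(0)]<\infty$) and $\EE[V(k\ell)]/k\to 0$.
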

A more accurate notation than $\al_V$ would be $\al_{\PP_{V(0)}}$ since the norm does not depend on the whole field $(V(x))_{x\in\Z^d}$ but only on the distribution $\PP_{V(0)}$ of $V(0)$. However, for simplicity we shall use the notation $\al_V$. By Proposition \ref{zp} and (\ref{UV}), 
\begin{equation}
\label{sim}A_V(\ell):=\lim_{k\to\infty}\frac{-\ln G(0,k\ell,V)}{k}=\lim_{k\to\infty}\frac{-\EE\left[\ln G(0,k\ell,V)\right]}{k}
\end{equation}
 exists as well whenever $\EE[\ln(V(0)+1)]<\infty$ and is related to $\al_V$ through
 \begin{equation}\label{UV2}
 A_V=\al_{\ln(V+1)}.
 \end{equation}
By first averaging the function $e(0,k\ell,V)$ with respect to $\PP$ and then taking the logarithm in the definition of the quenched Lyapunov exponents one obtains the so-called annealed or averaged Lyapunov exponents. 
The following result is partially contained in \cite{Fl07}. 
We shall comment on it in the appendix.
\begin{pro} \label{db}
There is a non-random norm
$\beta_V$ on $\R^d$, the so-called annealed or averaged Lyapunov exponent, such that for all $\ell\in\Z^d$,
\begin{equation}\label{be}
\beta_V(\ell)=\lim_{k\to\infty}\frac{-1}{k}\ln \EE\left[e(0,k\ell,V)\right]=\lim_{k\to\infty}\frac{-1}{k}\ln \EE\left[g(0,k\ell,V)\right].
\end{equation}
The norm $\beta_V$ is invariant under the isometries of $\Z^d$ which preserve 0.
\end{pro}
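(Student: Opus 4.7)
I build $\beta_V$ via subadditivity for the annealed $e$-probabilities, extend it to a norm on $\R^d$, and finally identify the $g$-limit with the $e$-limit.

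\emph{Existence and norm properties.} Fix $\ell\in\Z^d\setminus\{0\}$ and set $a_k:=-\ln\EE[e(0,k\ell,V)]$. The strong Markov property applied at $H(j\ell)$ gives the pathwise submultiplicativity
\[
e(0,(j+k)\ell,V)\ \geq\ e(0,j\ell,V)\cdot e(j\ell,(j+k)\ell,V).
\]
Both factors are nonincreasing in the i.i.d.\ field $V$, so the FKG inequality combined with translation invariance yields $\EE[e(0,(j+k)\ell,V)]\geq\EE[e(0,j\ell,V)]\cdot\EE[e(0,k\ell,V)]$, i.e.\ $a_{j+k}\leq a_j+a_k$. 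Fekete's lemma then produces $\beta_V(\ell):=\lim_k a_k/k=\inf_k a_k/k$. Positive homogeneity over $\N$ is immediate, and the triangle inequality follows from the same Markov-plus-FKG argument applied to the three points $0$, $k\ell_1$, $k(\ell_1+\ell_2)$. Setting $\rho:=\EE[e^{-V(0)}]\in(0,1)$ (which is strictly less than $1$ by $\PP[V(0)>0]>0$), a straight-line path from $0$ to $k\ell$ gives the lower estimate $\EE[e(0,k\ell,V)]\geq(2d)^{-k\|\ell\|_1}\rho^{k\|\ell\|_1+1}$, hence $\beta_V(\ell)\leq C\|\ell\|_1$; conversely, any walk reaching $k\ell$ visits at least $k\|\ell\|_1+1$ distinct sites, so by Fubini $\EE[\exp(-\sum_n V(S(n)))]\leq\rho^{k\|\ell\|_1+1}$ uniformly on $\{H(k\ell)<\infty\}$, giving $\beta_V(\ell)\geq-(\ln\rho)\,\|\ell\|_1>0$. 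The resulting Lipschitz regularity permits extension of $\beta_V$ from $\Z^d$ to $\Q^d$ by homogeneity and then to $\R^d$ by continuity; invariance under lattice isometries fixing $0$ is immediate from distributional invariance of $V$.

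\emph{Matching the two limits.} The strong Markov property at $H(y)$ yields the exact identity $g(0,y,V)=e(0,y,V)\cdot g(y,y,V)$. Keeping only $m=0$ in $g(y,y,V)$ gives $g(y,y,V)\geq e^{-V(y)}$, and since $V(y)$ does not enter $e(0,y,V)$ (the defining sum stops at $H(y)-1$), this factor is independent of $e(0,y,V)$, so
\[
\EE[g(0,y,V)]\ \geq\ \rho\,\EE[e(0,y,V)].
\]
Consequently $b_k:=-\ln\EE[g(0,k\ell,V)]\leq a_k-\ln\rho$, and $\limsup_k b_k/k\leq\beta_V(\ell)$. For the reverse inequality one needs an upper estimate on $g(y,y,V)$. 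In $d\geq 3$ the deterministic bound $g(y,y,V)\leq g(y,y,0)<\infty$ suffices at once and yields $\EE[g(0,k\ell,V)]\leq(\mathrm{const})\cdot\EE[e(0,k\ell,V)]$.

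\emph{Main obstacle.} The delicate case is $d\leq 2$, where $g(y,y,V)$ is unbounded and the two factors $e(0,y,V)$ and $g(y,y,V)$ are positively correlated (both decreasing in $V$), so their expectations cannot simply be factored. I would handle this via the excursion representation $g(y,y,V)=e^{-V(y)}/\bigl(1-\phi_y e^{-V(y)}\bigr)$, where the $V$-weighted return probability $\phi_y$ depends only on $(V(x))_{x\neq y}$. The assumption $\PP[V(0)>0]>0$ forces $\phi_y$ to be close to $1$ only on events on which $V$ vanishes throughout a large region around $y$, producing stretched-exponential upper tails for $g(y,y,V)$. Truncating at a polynomial level $M=M(k)$ and using $e(0,y,V)\leq 1$,
\[
\EE[g(0,k\ell,V)]\ \leq\ M\,\EE[e(0,k\ell,V)]+\EE\bigl[g(y,y,V)\,\won_{g(y,y,V)>M}\bigr],
\]
where the second term is made negligible by the tail bound and the first contributes only an $O(\log k)$ correction; this yields $\liminf_k b_k/k\geq\beta_V(\ell)$ and completes the argument.
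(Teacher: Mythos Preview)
Your construction of $\beta_V$ via the Harris/FKG inequality and Fekete's lemma is standard and correct; the paper simply cites \cite{Fl07} for this part and does not reprove it. The substantive content of the proposition is the second equality in \eqref{be}, and here your strategy and the paper's are closely related but not identical. The easy direction (via $g(y,y,V)\ge e^{-V(y)}$ and independence of $V(y)$ from $e(0,y,V)$) is the same. For the hard direction both arguments rest on the excursion identity $g(y,y,V)=e^{-V(y)}/(1-e^{-V(y)}\phi_y)$ and a truncation of $g(y,y,V)$; the difference lies in the threshold and the tail input.

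The paper fixes $\eps>0$ with $\PP[V(0)\ge\eps]\ge\eps$, sets $A=\{x:V(x)\ge\eps\}$, and proves $g(k\ell,k\ell,V)\le c\,P_{k\ell}[H_2(k\ell)>H(A)]^{-1}$. It truncates at the subexponential level $e^{k^{2/3}}$ and controls the bad event through the $|\cdot|_1$-distance $L$ from $0$ to $A$, using only the crude bound $P_0[H_2(0)>H(A)]\ge(2d)^{-L}$ in $d\ge2$ (and $\ge 1/(2(L\vee1))$ in $d=1$) together with $\PP[L\ge n]\le(1-\eps)^{c_d n^d}$. The large threshold $e^{k^{2/3}}$ is precisely what makes the crude $(2d)^{-L}$ estimate sufficient, and the paper treats all dimensions at once.

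Your plan instead truncates at a polynomial level $M(k)$ and appeals to ``stretched-exponential'' tails of $g(y,y,V)$. This is viable, but it requires sharper random-walk input than the paper's: with the crude $(2d)^{-L}$ bound one only gets $\PP[g(y,y,V)>M]\lesssim e^{-c(\log M)^2}$ in $d=2$, which at polynomial $M(k)$ decays like $e^{-c(\log k)^2}$, far too slowly to beat the exponential scale $e^{-\beta_V(\ell)k}$. To make polynomial truncation work in $d=2$ you would need the classical estimate $P_0[\text{hit a given point at distance }L\text{ before return}]\asymp 1/\log L$, giving $g(y,y,V)\lesssim \log L$ and hence tails far better than stretched-exponential. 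You assert this qualitatively but do not derive it. Your $d\ge3$ shortcut via the transient bound $g(y,y,V)\le g(y,y,0)<\infty$ is, on the other hand, a genuine simplification over the paper.

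In short: your outline is correct and in the same spirit as the paper's, but the paper trades a larger (subexponential) cutoff for a much cruder tail estimate, whereas your polynomial cutoff demands a finer two-dimensional hitting estimate that you state without proof.
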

Note that while the quenched Lyapunov exponent $\al_V$ has not been defined in Proposition \ref{zp} if $\EE[V(0)]=\infty$,   Proposition \ref{db} states that the annealed exponent $\beta_V$ is well-defined and finite even in this case.

Similarly to (\ref{sim}) and (\ref{UV2}) we set
\begin{equation}\label{sim2}
B_V(\ell):=\lim_{k\to\infty}\frac{-\ln \EE[G(0,k\ell,V)]}{k}=\beta_{\ln(V+1)}(\ell).
\end{equation}
It follows immediately from Jensen's inequality that 
\begin{equation}\label{jen}
\beta_V\le \al_V\quad\mbox{and}\quad B_V\le A_V.
\end{equation}
We refer the reader to the book \cite{Sz98} for detailed information and literature about Lyapunov exponents and related quantities for Brownian motion among Poissonian obstacles. Chapter 7 of this book also discusses connections with other models and gives an overall account of then known results and open problems.  One open problem, which is mentioned in \cite[p.\ 326]{Sz98}, is to prove the equality of quenched and averaged Lyapunov exponents for small potentials in high dimensions.  For a class of random walks with drift this problem was solved in \cite{Fl08} for $d\ge 4$. For the simple symmetric random walk, it was proved in \cite{Zy09} that if $d\ge 4$ then for each $\la>0$ there is  $\gamma^*(\lambda)>0$ such that $\alpha_{\lambda+\gamma V}=\beta_{\lambda+\gamma V}$ for all $\gamma\in[0,\gamma^*(\la))$. The question whether this also holds for $\la=0$ is still open.

In the present paper we consider the behavior of the quenched and the annealed  Lyapunov exponents as the potential tends to zero and show that asymptotically they behave in the same way. This question was previously investigated in \cite{Wa01} and \cite{Wa02}. These papers study the asymptotic behavior of $A_{\ga V}$ and $B_{\ga V}$ as $\ga\searrow 0$, where $\ga>0$ is a scalar.
\begin{theo}{\rm (\cite[Theorem 4.2, Corollary]{Wa01} and \cite[Theorem 4.3]{Wa02})}
\label{wang}
Assume $\EE[V(0)^2]<\infty$ and let
 $\|\ell\|_2=1$. Then there is a constant $c>0$ which depends only on $d$ and $\EE[V(0)]$ such that  
\[
c< \liminf_{\ga\searrow 0}\frac{B_{\ga V}(\ell)}{\sqrt{\ga}}\le \limsup_{\ga\searrow 0}
\frac{A_{\ga V}(\ell)}{\sqrt{\ga}}
<\infty.
\]
\end{theo}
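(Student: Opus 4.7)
The middle inequality $B_{\ga V}(\ell)\le A_{\ga V}(\ell)$ is just Jensen's inequality (\ref{jen}) and holds for every $\ga>0$, so only the outer two inequalities require work.

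\textbf{Upper bound.} My plan is to dominate $A_{\ga V}$ by the exponent of a constant potential of matching mean, then compute that exponent directly. Since $\ln(\cdot+1)$ is concave, the law of $\ln(\ga V(0)+1)$ is more variable, in the concave ordering from Proposition \ref{zp}, than the deterministic constant $\la_\ga:=\EE[\ln(\ga V(0)+1)]$. Combining the monotonicity statement of Proposition \ref{zp} with (\ref{UV2}) gives
\[A_{\ga V}(\ell)=\al_{\ln(\ga V+1)}(\ell)\le \al_{\la_\ga}(\ell).\]
For a constant potential $\la$ the Green's function reduces to $g(0,y,\la)=\sum_m e^{-\la m}P[S(m)=y]$, whose lattice Fourier representation
\[g(0,y,\la)=\int_{[-\pi,\pi]^d}\frac{e^{-ik\cdot y}}{1-e^{-\la}\hat p(k)}\frac{dk}{(2\pi)^d},\qquad \hat p(k)=\frac{1}{d}\sum_{j=1}^d\cos k_j,\]
yields $\al_\la(\ell)\sim\sqrt{2d\la}\,\|\ell\|_2$ as $\la\searrow 0$ by a standard pole analysis at $|k|=i\sqrt{2d\la}$. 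Since $\la_\ga\sim\ga\,\EE[V(0)]$ by dominated convergence (using $\ln(1+x)\le x$), I obtain
\[\limsup_{\ga\searrow 0}\frac{A_{\ga V}(\ell)}{\sqrt\ga}\le\sqrt{2d\,\EE[V(0)]}<\infty.\]

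\textbf{Lower bound.} Here I need upper bounds on the annealed Green's function $\EE[g(0,k\ell,\ga V)]$. Fubini and the i.i.d.\ structure give
\[\EE[g(0,k\ell,\ga V)]=\sum_{m\ge 0}E\!\left[\prod_{x\in\Z^d}\phi(\ga L_x(m+1))\won_{S(m)=k\ell}\right],\]
with $L_x(m+1):=\sum_{n=0}^m\won_{S(n)=x}$ and $\phi(t):=\EE[e^{-tV(0)}]$. Because $\PP[V(0)>0]>0$, one has $\phi'(0^+)=-\EE[V(0)]<0$, so there exist $t_0>0$ and $c_0\in(0,\EE[V(0)])$ with $\phi(t)\le e^{-c_0 t}$ on $[0,t_0]$. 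Split the inner expectation by whether $\max_x L_x(m+1)$ is at most $t_0/\ga$ or not. On the good event $\prod_x\phi(\ga L_x)\le e^{-c_0\ga(m+1)}$, so the good contribution is bounded by the Green's function with \emph{constant} killing $c_0\ga$, whose decay rate $\sqrt{2dc_0\ga}\,\|\ell\|_2\,k$ comes from the same Fourier computation as above. The bad event forces the walk to occupy a single vertex more than $t_0/\ga$ times in $m\lesssim k/\sqrt\ga$ steps, an event whose probability, summed against the union bound over vertices and $m$, absorbs into a strictly smaller exponential. This yields $\EE[g(0,k\ell,\ga V)]\le C\exp(-c\sqrt\ga\,k)$ and hence $\liminf_{\ga\searrow 0}B_{\ga V}(\ell)/\sqrt\ga\ge c>0$.

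\textbf{Main obstacle.} The delicate point is the bad event in the lower bound. One has to balance three scales simultaneously: the time horizon $m\sim k/\sqrt\ga$ of the dominant paths, the maximal local time $L^*(m):=\max_x L_x(m)$, and the effective killing $\ga L^*$; the linearization $\phi(\ga L)\le e^{-c_0\ga L}$ is only valid while $\ga L^*\le t_0$. This is harmless in $d\ge 3$ where local times are tight, but in $d=2$ (local times $\sim\log m$) and especially $d=1$ (local times $\sim\sqrt m$) the estimate is subtle and likely needs an initial truncation of the potential at a fixed level $M$, exploiting the pointwise monotonicity $\beta_V\ge\beta_{V\wedge M}$ (valid because $g$ is monotone decreasing in the potential), followed by $M\to\infty$ with $\EE[V\wedge M]\to\EE[V(0)]$. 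A softer alternative, giving a slightly weaker but still positive constant, is to start from a Donsker--Varadhan variational characterization of the annealed exponent and perform a $\sqrt\ga$ rescaling of the occupation measure, sidestepping local-time estimates entirely.
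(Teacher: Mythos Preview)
Your upper bound is essentially the paper's: reduce to a constant potential via the variability ordering of Proposition~\ref{zp}, then compute the constant-potential exponent. The paper carries out the last step probabilistically, using the slab stopping times $T_k$ of Lemma~\ref{s} and the invariance-principle Lemma~\ref{scale} to first compute the point-to-hyperplane exponent $\overline\al_\ga$ and then transfer this to $\al_\ga$ by a convex-geometry argument; you instead invoke a Fourier/pole analysis. Both routes give the same $\sqrt{2d\la}\,\|\ell\|_2$ asymptotic, and the paper even remarks that an analytic route via the explicit formula (\ref{form}) is available, so this difference is a matter of taste.

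Your lower bound, however, has a genuine gap. The Lyapunov exponent is a limit as $k\to\infty$ at \emph{fixed} $\ga$, and in $d=1$ the maximal local time of the walk by the dominant time horizon $m\sim k/\sqrt\ga$ is of order $\sqrt m\sim k^{1/2}\ga^{-1/4}$. For $k$ large this exceeds $t_0/\ga$, so your ``bad'' event is typical, not rare, and the claim that its contribution ``absorbs into a strictly smaller exponential'' fails: once $m\gtrsim(t_0/\ga)^2$ almost every path is bad, and you have no killing bound left. Truncating $V$ at level $M$ does not help, since the obstruction is the size of $\ga L_x$, not the tail of $V$; the linearisation $\phi_M(t)\le e^{-c_0 t}$ still breaks down for $t>t_0$. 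The Donsker--Varadhan alternative is not developed enough to count as a proof.

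The paper's device for this is the missing idea. Rather than split on the size of the global maximal local time, it decomposes the path at the slab stopping times $T_i$ of Lemma~\ref{s} and \emph{truncates} the local times: $\ga\,\ell_0^{T_{m_k}}(x)$ is bounded below by the sum, over at most $\lceil\ga^{-1/8}\rceil$ consecutive slabs, of $\ga\bigl(\ell_{T_{i-1}}^{T_i}(x)\wedge\ga^{-3/4}\bigr)$, see (\ref{most}). This truncated quantity is \emph{deterministically} at most $\la_0$, so the concavity bound $\La_\ga(\la)\ge t\la$ applies unconditionally, yielding an exponent of a sum of i.i.d.\ variables $Y_i$. One then only needs to show that $Y_1=T_1-T_0$ with high probability as $\ga\searrow 0$, which is a single-slab estimate (gambler's ruin plus a local-time tail bound on a time window of length $\ga^{-5/4}$) rather than a global one over $H(k\ell)$. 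This is what makes the argument uniform in $k$ and work in every dimension.
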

In fact, the main result of \cite{Wa01} is Theorem \ref{wang} under the stronger assumption for the upper bound that the distribution of $V(0)$ has bounded support. In \cite{Wa02} this assumption was weakened to finiteness of  $\EE[V(0)^2]<\infty$ and it was suggested in the remark after \cite[Theorem 4.3]{Wa02}  that this weaker assumption was likely to be optimal for the conclusion of Theorem \ref{wang} to hold. The proofs use a supersymmetric representation of the averaged Green's function and multiscale analysis.

In the present paper, we give a relatively elementary proof of a stronger version of Theorem \ref{wang}.
It shows, in particular, that the statement of Theorem \ref{wang} holds if and only if $\EE[V(0)]$ is finite.
\begin{theorem} 
\label{cor} Assume that $\EE[\ln(V(0)+1)]<\infty$ and 
let $\|\ell\|_2=1$. Then 
\[
\lim_{\ga\searrow 0}\frac{A_{\ga V}(\ell)}{\sqrt{\ga}}=\lim_{\ga\searrow 0}\frac{B_{\ga V}(\ell)}{\sqrt{\ga}}=
\sqrt{2d\, \EE[V(0)]}.
\]
\end{theorem}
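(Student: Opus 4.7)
My plan is to sandwich the two exponents by producing an upper bound on $A_{\gamma V}/\sqrt\gamma$ and a matching lower bound on $B_{\gamma V}/\sqrt\gamma$, each of order $\sqrt{2d\EE V}(1\pm o(1))$; the inequality $B_{\gamma V}\le A_{\gamma V}$ from (\ref{jen}) then pins both limits. By (\ref{UV2}), $A_{\gamma V}=\alpha_{W_\gamma}$ and $B_{\gamma V}=\beta_{W_\gamma}$ with $W_\gamma:=\ln(1+\gamma V)$; dominated convergence gives $\EE W_\gamma/\gamma\to v:=\EE V$ as $\gamma\to 0$ whenever $\EE V<\infty$, and the case $\EE V=\infty$ follows by applying the finite-mean result to $V\wedge M$ and letting $M\to\infty$, using monotonicity of the exponents in the potential. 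It therefore suffices to show $\alpha_{W_\gamma}/\sqrt\gamma,\,\beta_{W_\gamma}/\sqrt\gamma\to\sqrt{2dv}$ for an arbitrary i.i.d.\ nonnegative family $W_\gamma$ with $\EE W_\gamma/\gamma\to v$.

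For the upper bound I would apply Jensen's inequality directly to the walk expectation. For any $m\ge 0$,
\[
g(0,k\ell,W_\gamma)\ge P[S(m)=k\ell]\,E\bigl[e^{-\sum_{n=0}^m W_\gamma(S(n))}\,\big|\,S(m)=k\ell\bigr];
\]
applying Jensen ($e^{-x}$ convex) to the conditional walk expectation, then taking $\EE$ and using Fubini (each $W_\gamma(x)$ has mean $\EE W_\gamma$), gives
\[
\EE[\ln g(0,k\ell,W_\gamma)]\ge \ln P[S(m)=k\ell]-(m+1)\EE W_\gamma.
\]
Choosing $m=\lceil k\sqrt{d/(2\EE W_\gamma)}\rceil$ and invoking the local CLT, $-\ln P[S(m)=k\ell]/k\to\sqrt{d\EE W_\gamma/2}$ and $(m+1)\EE W_\gamma/k\to\sqrt{d\EE W_\gamma/2}$ as $k\to\infty$, yielding $\alpha_{W_\gamma}(\ell)\le\sqrt{2d\EE W_\gamma}$. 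Dividing by $\sqrt\gamma$ and letting $\gamma\to 0$ gives $\limsup\alpha_{W_\gamma}/\sqrt\gamma\le\sqrt{2dv}$; Jensen $\beta\le\alpha$ propagates this to $\beta_{W_\gamma}/\sqrt\gamma$.

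For the matching lower bound I would bound the annealed Green's function from above. Fubini gives
\[
\EE[g(0,k\ell,W_\gamma)]=\sum_{m\ge 0}E\Bigl[\prod_{x\in\Z^d}\phi_\gamma(L_m(x))\won_{S(m)=k\ell}\Bigr],
\]
with $\phi_\gamma(t):=\EE[e^{-tW_\gamma(0)}]$ and $L_m(x)=\sum_{n=0}^m\won_{S(n)=x}$. Since $-\ln\phi_\gamma$ is concave with $-\ln\phi_\gamma(0)=0$, it is subadditive, giving the pointwise estimate $\prod_x\phi_\gamma(L_m(x))\le\phi_\gamma(m+1)$. On walks with $\max_xL_m(x)\le L^*$ for a suitable $\gamma$-dependent threshold, one refines this to $\prod_x\phi_\gamma(L_m(x))\le e^{-(1-\epsilon)(m+1)\EE W_\gamma}$ via the expansion $-\ln\phi_\gamma(t)\ge(1-\epsilon)t\EE W_\gamma$ valid on $[0,L^*]$. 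Combining with the heat-kernel bound $P[S(m)=k\ell]\le Cm^{-d/2}e^{-ck^2/m}$ and a saddle-point analysis at $m^*\sim k\sqrt{d/(2\gamma v)}$ produces $-\ln\EE[g(0,k\ell,W_\gamma)]\ge k\sqrt{2d\gamma v}(1-o(1))$, hence $\beta_{W_\gamma}\ge\sqrt{2d\gamma v}(1-o(1))$; Jensen then gives the matching lower bound on $\alpha_{W_\gamma}$.

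The main obstacle is making this lower bound uniform in $\gamma$ and $k$. In transient dimensions ($d\ge 3$) the SRW local-time field concentrates and the atypical walks (with $\max_xL_m>L^*$) contribute negligibly, so the above outline goes through cleanly. In recurrent dimensions $d\le 2$—particularly when $\PP[V(0)=0]>0$, in which case $\phi_\gamma(t)$ saturates at $\PP[V(0)=0]$ as $t\to\infty$ rather than decaying to $0$—the naive subadditive estimate $\phi_\gamma(m+1)$ no longer forces the Laplace sum to decay exponentially, and a more delicate coarse-graining or path-counting argument is needed to control the contribution of walks with concentrated local-time profiles.
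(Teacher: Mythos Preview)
Your reduction to $W_\ga=\ln(1+\ga V)$ and the handling of $\EE[V]=\infty$ by truncating to $V\wedge M$ match the paper (Example~\ref{ex2}). Your upper bound is also morally the paper's---both pass via Jensen to the constant potential $\EE[W_\ga]$---but you then estimate $\al_{\EE[W_\ga]}(\ell)$ more directly through a local-CLT/large-deviation bound on $P[S(m)=k\ell]$, which is a legitimate shortcut (after fixing parity and noting that for fixed $\ga$ the $k\to\infty$ limit is $cI(\ell/c)$ with the true Cram\'er rate $I$, whose quadratic approximation becomes exact only as $\ga\to 0$; this still yields the required $\limsup$).

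The lower bound, however, has a genuine gap, and not only for $d\le 2$. The Lyapunov exponent requires $k\to\infty$ with $\ga$ held fixed; at the saddle point $m\sim k/\sqrt{\ga}\to\infty$ the maximal local time $\max_x L_m(x)$ is \emph{typically} unbounded (of order $\log m$ even in $d\ge 3$), so once $k$ is large it exceeds any $\ga$-dependent threshold $L^*$. Hence your ``good'' event $\{\max_x L_m\le L^*\}$ fails for the bulk of the contributing walks, not just a fringe; and on its complement the subadditive bound $\prod_x\phi_\ga(L_m(x))\le\phi_\ga(m+1)$ gives nothing whenever $\PP[V(0)=0]>0$, since then $\phi_\ga(m+1)\ge\PP[V(0)=0]$ does not decay at all. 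The coarse-graining you allude to is therefore not a low-dimensional refinement but the heart of the argument in every dimension.

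The paper resolves this by decoupling $k$ from the local-time control via a slab decomposition. One introduces stopping times $T_i$ at which $S\cdot\ell$ advances by $\ga^{-1/2}$; the path increments between consecutive $T_i$ are i.i.d.\ by the strong Markov property (Lemma~\ref{s}). Truncating the local time inside each slab (at level $\ga^{-3/4}$, over a backward window of width $\ga^{-5/8}$) makes the $i$-th contribution $Y_i$ a function of the $i$-th increment alone. Taking $k\to\infty$ \emph{first} turns the product over i.i.d.\ slabs into
\[
\beta_{W_\ga}(\ell)\ \ge\ \frac{\ell\cdot\ell}{\ga^{-1/2}+\|\ell\|_\infty}\,\bigl(-\ln E\bigl[e^{-t\ga Y_1}\bigr]\bigr),
\]
where $Y_1$ comes from a \emph{single} slab crossing and no longer involves $k$. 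Only now is $\ga$ sent to $0$: on the event that this one crossing neither backtracks by $\ga^{-5/8}$ nor accumulates local time exceeding $\ga^{-3/4}$ at any site one has $Y_1=T_1$, both exceptional events have vanishing probability by gambler's-ruin and an elementary Markov-property estimate, and Lemma~\ref{scale} then gives $-\ln E[e^{-t\ga T_1}]\to\sqrt{2dt}/\|\ell\|_2$. This slab mechanism is precisely the missing ``more delicate coarse-graining'' you flag, and it works uniformly in the dimension.
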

In fact, Theorem \ref{cor} follows from a more general result, see Theorem \ref{new} and Example \ref{ex2} below. Note that the common limit in Theorem \ref{cor} is invariant under rotations of $\ell$. It is also invariant under the replacement of the potential $V$ by its mean. The latter property indicates that both Lyapunov exponents exhibit mean field behavior for small potentials.

Although multiplying the potential $V$ by a constant $\ga$ and then letting $\ga$ go to zero is probably the simplest way to send the potential to zero, there are other ways to achieve this, which are covered by our approach as well.  In the following we shall assume that we have a family $(V_\ga)_{\ga>0}$ of i.i.d.\
non-negative potentials $V_\ga=(V_\ga(x))_{x\in\Z^d}$ and obtain upper and lower bounds on the asymptotic behavior of the associated Lyapunov exponents $\al_{V_\ga}$ and $\beta_{V_\ga}$ as $\ga\searrow 0$. 
\begin{theorem}\label{upp}Assume that $\EE[V_\ga(0)]<\infty$ for all $\ga>0$. Then for all $\ell\in\R^d$,
\[
\limsup_{\ga\searrow 0}\frac{\al_{V_\ga}(\ell)}{\sqrt{\ga}}\le\limsup_{\ga\searrow 0}\sqrt{\frac{2d\, \EE[V_\ga(0)]}{\ga}}\ 
\|\ell\|_2.
\]
\end{theorem}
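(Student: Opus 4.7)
The plan is to prove the pointwise quenched bound
\[
\al_V(\ell)\ \le\ \|\ell\|_2\,\sqrt{2d\,\EE[V(0)]}
\]
for every i.i.d.\ nonnegative potential $V$ with $\EE[V(0)]<\infty$ and every $\ell\in\R^d$, and then apply it with $V=V_\ga$, divide by $\sqrt{\ga}$, and pass to $\limsup_{\ga\searrow 0}$. Since $\al_V\ge\beta_V$, the corresponding bound for $\beta_V$ will be automatic, but the stronger quenched bound is what the theorem needs.

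The first step exploits the representation (\ref{do2}) by keeping a single term of (\ref{gr}). For any $m\ge 0$ and $y\in\Z^d$,
\[
g(0,y,V)\ \ge\ P[S(m)=y]\cdot E\!\left[\exp\!\Big(-\sum_{n=0}^{m}V(S(n))\Big)\ \Big|\ S(m)=y\right].
\]
Applying Jensen's inequality (concavity of $\ln$) inside the conditional expectation,
\[
\ln g(0,y,V)\ \ge\ \ln P[S(m)=y]\ -\ E\!\left[\sum_{n=0}^{m}V(S(n))\ \Big|\ S(m)=y\right].
\]
Averaging over $V$, using the independence of $V$ from $S$ together with Fubini and translation invariance, yields the clean quenched estimate
\[
\EE\!\left[\ln g(0,y,V)\right]\ \ge\ \ln P[S(m)=y]\ -\ (m+1)\,\EE[V(0)].
\]

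Next, for $\ell\in\Z^d$ I would take $y=k\ell$ and $m=\lceil\tau k\rceil$ (adjusted by one to match the parity of $k\ell$ on the bipartite lattice), where $\tau>0$ is a free parameter to be optimized. The local central limit theorem for the symmetric nearest-neighbor walk, whose single-step covariance is $d^{-1}I$, gives
\[
P[S(m)=k\ell]\ \sim\ 2\Big(\tfrac{d}{2\pi m}\Big)^{d/2}\exp\!\Big(-\tfrac{d\,k^2\|\ell\|_2^2}{2m}\Big),
\]
so $-\ln P[S(m)=k\ell]=\tfrac{d\,k\|\ell\|_2^2}{2\tau}+O(\log k)$ as $k\to\infty$. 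Combining with the preceding display and dividing by $k$, the limit identity in (\ref{do2}) gives
\[
\al_V(\ell)\ \le\ \frac{d\,\|\ell\|_2^2}{2\tau}\ +\ \tau\,\EE[V(0)].
\]
Minimizing the right-hand side over $\tau>0$ by taking $\tau=\|\ell\|_2\sqrt{d/(2\EE[V(0)])}$ (or letting $\tau\to\infty$ when $\EE[V(0)]=0$) produces $\al_V(\ell)\le\|\ell\|_2\sqrt{2d\,\EE[V(0)]}$ on $\Z^d$; this extends to $\R^d$ by positive-rational homogeneity followed by continuity, both of which are guaranteed by $\al_V$ being a norm (Proposition \ref{zp}). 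Finally, specializing to $V=V_\ga$, dividing by $\sqrt{\ga}$, and taking $\limsup_{\ga\searrow 0}$ on both sides finishes the proof.

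The only delicate point is the local CLT step: one must pick $m$ of the correct parity so that $P[S(m)=k\ell]$ is nonzero and of the expected order $m^{-d/2}$, and must verify that the additive $O(\log k)$ correction is genuinely negligible after division by $k$ uniformly in the regime $m\asymp k$. This is a standard consequence of the classical local CLT for mean-zero nearest-neighbor walks; no new probabilistic idea is needed. Everything else is the Jensen--Fubini bookkeeping above and an elementary one-variable optimization.
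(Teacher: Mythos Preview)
There is a genuine gap in the local CLT step, and in fact the non-asymptotic pointwise bound you are aiming for is false. With $m=\lceil\tau k\rceil$ and target $y=k\ell$, the displacement is of order $m$, i.e.\ of order $\sqrt{m}$ standard deviations; this is the large-deviation regime, where the classical local CLT (valid for $|y|=o(m^{2/3})$ or so) does not apply. The correct statement is
\[
\lim_{k\to\infty}\frac{-1}{k}\ln P\big[S(\lceil\tau k\rceil)=k\ell\big]=\tau\,I(\ell/\tau),
\]
with $I$ the Cram\'er rate function of a single step, and for the simple walk one has $I(v)>\tfrac{d}{2}|v|^2$ for $v\ne 0$. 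So your display $-\ln P[S(m)=k\ell]=\tfrac{dk\|\ell\|_2^2}{2\tau}+O(\log k)$ is incorrect for fixed $\tau$, and the conclusion $\al_V(\ell)\le\|\ell\|_2\sqrt{2d\,\EE[V(0)]}$ fails: already in $d=1$ with constant potential $V\equiv c$ one has $\al_c(1)=\ln\big(e^{c}+\sqrt{e^{2c}-1}\big)$, which exceeds $\sqrt{2c}$ for $c=1$ (numerically $1.66>1.41$) and grows like $c$, not $\sqrt{c}$, as $c\to\infty$.

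Your approach can be repaired to give the theorem, but only as an asymptotic statement. Replacing the Gaussian rate by $\tau I(\ell/\tau)$ yields $\al_{V_\ga}(\ell)\le\tau I(\ell/\tau)+\tau\,\EE[V_\ga(0)]$ for every $\tau>0$; in the nontrivial case $\limsup_{\ga}\EE[V_\ga(0)]/\ga<\infty$ one has $\EE[V_\ga(0)]\to 0$, so choosing $\tau=\tau(\ga)\asymp\EE[V_\ga(0)]^{-1/2}\to\infty$ makes $\ell/\tau\to 0$ and the quadratic approximation $I(v)\sim\tfrac{d}{2}|v|^2$ becomes legitimate, recovering the claimed limsup bound. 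This is in spirit the paper's two-step reduction: your Jensen--Fubini computation is exactly the comparison $\al_{V_\ga}\le\al_{\EE[V_\ga(0)]}$ from Proposition~\ref{zp}, after which one must prove $\al_c(\ell)/\sqrt{c}\to\sqrt{2d}\,\|\ell\|_2$ as $c\searrow 0$. The paper does that via point-to-hyperplane exponents and the invariance principle (Lemmas~\ref{s} and~\ref{scale}); your corrected route would do it via the second-order Taylor expansion of $I$ at $0$, which is arguably more direct but requires the large-deviation input in place of the local CLT you invoked.
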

\begin{theorem}\label{lowp}
Assume that $V_\ga(0)/\ga$ converges in distribution as $\ga\searrow 0$ to some
random variable $V$, where $\EE[V]$ may be infinite.
Then for all $\ell\in\R^d$,
\begin{equation}\label{cnn}
\liminf_{\ga\searrow 0}\frac{\beta_{V_\ga}(\ell)}{\sqrt{\ga}}\ge\sqrt{2d\,\EE[V]}\ 
\|\ell\|_2.
\end{equation}
\end{theorem}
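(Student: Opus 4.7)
\emph{Truncation.} By pointwise monotonicity, $V \leq W$ implies $g(\cdot, \cdot, V) \geq g(\cdot, \cdot, W)$, hence $\beta_V \leq \beta_W$. Therefore $\beta_{V_\ga \wedge M\ga}(\ell) \leq \beta_{V_\ga}(\ell)$ for every $M > 0$. Since $(V_\ga \wedge M\ga)/\ga$ converges in distribution to $V \wedge M$ (continuous mapping) and $\EE[V \wedge M] \uparrow \EE[V]$ as $M \to \infty$ (monotone convergence), it suffices to prove the lower bound under the additional assumption $V_\ga \leq M\ga$ and then let $M \to \infty$.

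\emph{Laplace transform and pathwise bound.} Combining $e^{-t} \leq 1 - t + t^2/2$ with $V_\ga^2 \leq M\ga\, V_\ga$ gives
\[\phi_\ga(k) := \EE[e^{-k V_\ga(0)}] \leq 1 - k\EE V_\ga\bigl(1 - kM\ga/2\bigr) \leq \exp\bigl(-k(1-\eps)\EE V_\ga\bigr)\]
for $k \leq 2\eps/(M\ga)$. By independence of the field $\{V_\ga(x)\}_{x \in \Z^d}$,
\[\EE\Bigl[\exp\Bigl(-\sum_{n=0}^m V_\ga(S(n))\Bigr)\Bigr] = \prod_{x} \phi_\ga(N_m(x)),\]
where $N_m(x)$ is the local time at $x$. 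Applying the first bound at sites with $N_m(x) \leq K := 2\eps/(M\ga)$ and the trivial bound $\phi_\ga \leq 1$ elsewhere, one obtains
\[\prod_x \phi_\ga(N_m(x)) \leq \exp\bigl(-(1-\eps)(m+1)\EE V_\ga + (1-\eps)\EE V_\ga \cdot D_m\bigr),\]
with $D_m := \sum_x (N_m(x) - K)^+$ the cumulative excess local time.

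\emph{Optimization.} Combining with the Gaussian local limit bound $P[S(m) = k\ell] \leq C m^{-d/2}\exp(-d k^2 \|\ell\|_2^2/(2m))$, summing over $m$, and optimizing the resulting exponent at $m^* = k\|\ell\|_2\sqrt{d/(2(1-\eps)\EE V_\ga)}$ yields, provided the factor arising from $D_m$ can be absorbed,
\[\EE[g(0, k\ell, V_\ga)] \leq C(\ga)\,k^{-d/2}\exp\bigl(-k\|\ell\|_2\sqrt{2d(1-\eps)\EE V_\ga}\bigr).\]
Taking $-\ln/k$, letting $k \to \infty$, dividing by $\sqrt\ga$, using $\EE V_\ga/\ga \to \EE[V\wedge M]$ (bounded convergence), and finally letting $\eps \to 0$ and $M \to \infty$ gives the claimed lower bound on $\liminf_{\ga \searrow 0} \beta_{V_\ga}(\ell)/\sqrt\ga$.

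\emph{Main obstacle.} Absorbing $D_m$ uniformly in $m$ is the hardest part. The natural remedy is an exponential change of measure tilting the SRW by a drift $\la \sim \sqrt\ga \cdot \ell$: under the tilt, the walk moves ballistically with velocity of order $\sqrt\ga$ in direction $\ell$ and typical site local times of order $1/\sqrt\ga$, which is much smaller than $K = \Theta(1/\ga)$, making $P[N_m(x) > K]$ super-polynomially small in $1/\sqrt\ga$. In dimensions $d \geq 3$ the undrifted SRW already satisfies strong local-time bounds and no tilting is strictly needed; in $d = 1, 2$ it is essential. The delicate technical point is to make the estimate effective uniformly for all large $k$ at each fixed $\ga$, as required by the $k \to \infty$ limit defining $\beta_{V_\ga}(\ell)$.
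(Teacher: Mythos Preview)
Your approach diverges substantially from the paper's, and the gap you yourself flag under ``Main obstacle'' is real and not closed. The definition of $\beta_{V_\ga}$ forces you to send $k\to\infty$ at \emph{fixed} $\ga$, and at the relevant time scale $m^*\sim k/\sqrt{\ga}$ the excess local time $D_{m^*}$ is not negligible in low dimensions: for $d=1$, typical per-site local times at time $m$ are of order $\sqrt{m}\sim \sqrt{k}\,\ga^{-1/4}$, which for $k$ large overruns your cutoff $K\sim 1/\ga$, so that $D_{m^*}\sim m^*$ and the displayed bound becomes vacuous. The tilting heuristic is sound, but nothing in the proposal shows how to carry it out uniformly in $k$; without that, the argument does not yield a lower bound on $\beta_{V_\ga}$ for any fixed small $\ga$.

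The paper sidesteps this $k$-uniformity problem by organizing the estimate around a regenerative structure rather than a sum over terminal times. It decomposes the path via the successive crossing times $T_i$ of slabs of width $\ga^{-1/2}$ perpendicular to $\ell$ (Lemma~\ref{s}), rewrites $\beta_{V_\ga}$ through the concave cumulant $\Lambda_\ga(\la)=-\ln\EE[e^{-\la V_\ga(0)/\ga}]$, and truncates the local time \emph{within each slab} at $\ga^{-3/4}$ while restricting the spatial window to $\ga^{-5/8}$ behind the current slab. Concavity turns the truncated sum into a linear lower bound; independence of the slab excursions then makes the bound factor, so that after $k\to\infty$ only a single-slab quantity $E[e^{-t\ga Y_1}]$ remains. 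On the good event $Y_1=T_1$, and the two bad events (the walk backtracks more than $\ga^{-5/8}$, or some site is hit more than $\ga^{-3/4}$ times before $T_1$) have probabilities that merely need to vanish as $\ga\searrow 0$---no uniformity in $k$ is required. Donsker scaling (Lemma~\ref{scale}) then gives $E[e^{-t\ga T_1}]\to e^{-\sqrt{2dt}/\|\ell\|_2}$, and letting $t\nearrow\EE[V]$ finishes. In effect, the slab decomposition performs the role of your proposed tilt, but packages it so that the local-time control is needed only on the fixed scale $T_1\sim\ga^{-1}$ rather than on the diverging scale $m^*$.
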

Combining Theorems \ref{upp} and \ref{lowp}  with (\ref{jen}) we immediately obtain the following main result. 
\begin{theorem} 
\label{new} 
Assume that $V_\ga(0)/\ga$ converges in distribution to some
random variable $V$ and that $\EE[V_\ga(0)]/\ga\in]0,\infty[$ converges to $\EE[V]\in[0,\infty]$ as $\ga\searrow 0$. Then for all $\ell\in\R^d$,
\[
\lim_{\ga\searrow 0}\frac{\al_{V_\ga}(\ell)}{\sqrt{\ga}}=\lim_{\ga\searrow 0}\frac{\beta_{V_\ga}(\ell)}{\sqrt{\ga}}=
\sqrt{2d\, \EE[V]}\ \|\ell\|_2.
\]
\end{theorem}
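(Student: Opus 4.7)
The plan is a sandwich argument: Theorem~\ref{upp} will supply an upper bound on $\al_{V_\ga}(\ell)/\sqrt{\ga}$, Theorem~\ref{lowp} will supply the matching lower bound on $\beta_{V_\ga}(\ell)/\sqrt{\ga}$, and the Jensen-type inequality~(\ref{jen}) $\beta_{V_\ga}\le\al_{V_\ga}$ will close the gap between the two Lyapunov exponents.

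First I would verify that the hypotheses of the two input theorems are met under the assumption of Theorem~\ref{new}. Theorem~\ref{upp} requires only $\EE[V_\ga(0)]<\infty$ for each $\ga>0$, which is part of the standing assumption $\EE[V_\ga(0)]/\ga\in\,]0,\infty[$; Theorem~\ref{lowp} requires convergence in distribution of $V_\ga(0)/\ga$, which is directly assumed. Next I would process the right-hand side of Theorem~\ref{upp}: since $\EE[V_\ga(0)]/\ga\to \EE[V]$ in $[0,\infty]$, the $\limsup$ appearing there is actually a limit and equals $\sqrt{2d\,\EE[V]}\,\|\ell\|_2$ (with the convention $\sqrt{\infty}=\infty$). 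Hence
\[
\limsup_{\ga\searrow 0}\frac{\al_{V_\ga}(\ell)}{\sqrt{\ga}}\ \le\ \sqrt{2d\,\EE[V]}\,\|\ell\|_2,
\]
while Theorem~\ref{lowp} gives the matching bound
\[
\liminf_{\ga\searrow 0}\frac{\beta_{V_\ga}(\ell)}{\sqrt{\ga}}\ \ge\ \sqrt{2d\,\EE[V]}\,\|\ell\|_2.
\]

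To finish, I would combine these with (\ref{jen}). Dividing $\beta_{V_\ga}(\ell)\le\al_{V_\ga}(\ell)$ by $\sqrt{\ga}$ and passing to limit inferior/superior yields the chain
\[
\sqrt{2d\,\EE[V]}\,\|\ell\|_2\ \le\ \liminf_{\ga\searrow 0}\frac{\beta_{V_\ga}(\ell)}{\sqrt{\ga}}\ \le\ \limsup_{\ga\searrow 0}\frac{\al_{V_\ga}(\ell)}{\sqrt{\ga}}\ \le\ \sqrt{2d\,\EE[V]}\,\|\ell\|_2,
\]
and likewise with $\al$ and $\beta$ interchanged on the two middle terms. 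This forces all four liminf/limsup to agree with the common value $\sqrt{2d\,\EE[V]}\,\|\ell\|_2$, so the limits exist and equal that value.

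Honestly there is no real obstacle: Theorem~\ref{new} is a purely algebraic combination step, and the genuine probabilistic content lies entirely in Theorems~\ref{upp} and~\ref{lowp}. The only subtle bookkeeping is the degenerate case $\EE[V]=\infty$, where the right-hand side of the upper bound from Theorem~\ref{upp} is vacuously $+\infty$ but the lower bound from Theorem~\ref{lowp} is still $+\infty$, so both sides of the claimed identity are $+\infty$ and the chain above is consistent.
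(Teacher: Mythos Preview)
Your proposal is correct and matches the paper's approach exactly: the paper states Theorem~\ref{new} as an immediate consequence of combining Theorems~\ref{upp} and~\ref{lowp} with~(\ref{jen}), without even writing out a separate proof. Your explicit verification of the hypotheses and the sandwich chain is a faithful unpacking of that one-line justification.
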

\begin{ex}\label{ex1}{\rm The simplest way to let $V_\ga(0)/\ga$ converge in distribution is to choose $V_\ga(0)=\ga V(0)$.
If we additionally assume $\EE[V(0)]<\infty$ then Theorem \ref{new} yields for all $\ell\in\R^d$,
\[
\lim_{\ga\searrow 0}\frac{\al_{\ga V}(\ell)}{\sqrt{\ga}}=\lim_{\ga\searrow 0}\frac{\beta_{\ga V}(\ell)}{\sqrt{\ga}}=
\sqrt{2d\, \EE[V(0)]}\ \|\ell\|_2.
\]
}
\end{ex}
\begin{ex}\label{ex2}{\rm To obtain Theorem \ref{cor} from Theorem \ref{new}
one needs to choose $V_\ga(0)=\ln(\ga V(0)+1)$. Then $V_\ga(0)/\ga$ converges 
a.s.\ to $V(0)$. Moreover, if $\EE[V(0)]<\infty$ then 
$\EE[V_\ga(0)]/\ga$ converges to $\EE[V(0)]$ by dominated convergence since $V_\ga(0)/\ga\le V(0)$. If $\EE[V(0)]=\infty$
then $\EE[V_\ga(0)]/\ga$ converges to $\EE[V(0)]$ as well by Fatou's lemma. Therefore,
Theorem \ref{new} together with (\ref{UV2}) and (\ref{sim2}) gives Theorem \ref{cor}.}
\end{ex}
The next two examples show that the conditions of Theorem \ref{new} are essential.
\begin{ex}\label{ex3}
{\rm Let $\PP[V_\ga(0)=0]=1-\ga$ and $\PP[V_\ga(0)=1]=\ga$. Then $\EE[V_\ga(0)]/\ga=1$  but $V_\ga(0)/\ga$ converges to 0 in probability as $\ga\searrow 0$. We shall show that in dimension one  $\al_{V_\ga}$ converges to zero faster than $\sqrt{\ga}$.

The ergodic theorem implies (see e.g.\ \cite[Proposition 10 (39)]{Ze98}) that $\al_{V_\ga}(1)=\EE[-\ln e(0,1,V_\ga)]$.
On the event $\{V_\ga(0)=1\}$ we have
$e(0,1,V_\ga)\ge e^{-1}P[S(1)=1]=(2e)^{-1}$. On the event $\{V_\ga(0)=0\}$ the quantity $e(0,1,V_\ga)$ is bounded below by the probability that the walk reaches 1 before it hits $-M$, where $M:=\inf\{m\ge 1\mid V_\ga(-m)=1\}$. Using that $M$ is geometrically distributed with parameter $\ga$ we obtain
\begin{eqnarray*}
\al_{V_\ga}(1)&\le& -\ln((2e)^{-1})\,\PP[V_\ga(0)=1]+\EE\left[-\ln\left(\frac{M}{M+1}\right)\won_{V_\ga(0)=0}\right]\\
& \le & 
(\ln 2e)\ga+\EE[1/M]\ =\ (\ln 2e)\ga-(\ga\ln\ga) /(1-\ga)\ \le \ -2\ga\ln\ga
\end{eqnarray*}
for small $\ga$.
}
\end{ex}
\begin{ex}\label{ex4}
{\rm 
Let $\PP[V_\ga(0)=\ga]=1-\ga^{1/3}$ and $\PP[V_\ga(0)=1/\ga]=\ga^{1/3}$. Then $V_\ga(0)/\ga$ converges to 1 in probability, whereas its expectation does not tend to 1 but to infinity as $\ga\searrow 0$. We shall show that in dimension one $\beta_{V_\ga}$ does not converge to zero as fast as $\sqrt{\ga}$. Indeed, for $d=1$ the quantity  $e(0,n,V_\ga)$ can be bounded above by the product of the i.i.d.\ random variables $e^{-V_\ga(i)}$, $i=0,\ldots,n-1$. Therefore,
\[ \beta_{V_\ga}(1)
\ge
-\ln \EE[e^{-V_\ga(0)}]\ =\ -\ln\left((1-\ga^{1/3})e^{-\ga}+\ga^{1/3}e^{-1/\ga}\right).
\]
For $\ga$ small enough this is greater than $-\ln((1-\ga^{1/3})+\ga^{1/3}/2)\ge \ga^{1/3}/2$.
}
\end{ex}
In the
next section we introduce our two main tools which are based on the strong Markov property and scaling of random walks.
These tools will be used for the proofs of both the upper bound Theorem \ref{upp}
in Section \ref{upper} and the lower bound Theorem \ref{lowp}
in Section \ref{lower}. 
In the appendix we comment on the proofs of Propositions \ref{zp} and \ref{db}.
\section{Two main tools}\label{tools}
For $\ga>0$ and $\ell\in\R^d\backslash\{0\}$ we define the stopping times $T_k=T_k(\ga,\ell)$, $k\in\N_0$, by
setting 
\begin{equation}\label{T}
T_0:=0\quad \mbox{ and }\quad T_{k+1}:=\inf\left\{n>T_k\mid S(n)\cdot \ell\ge S(T_k)\cdot \ell+\ga^{-1/2}\right\}.
\end{equation}
Note that these stopping times are increasingly ordered and $P$-a.s.\ finite. 
\begin{lemma}\label{s} Let $\ga>0$ and $\ell\in\R^d\backslash\{0\}$. Then the 
vectors $(S(n)-S(T_k))_{T_k< n\le T_{k+1}}$, $k\in\N_0$, with values in $\bigcup_{i\in\N}(\Z^d)^i$
are i.i.d.\ under $P$. Moreover, for all $0\le k\le K$, 
\begin{equation}\label{fra}
\ga^{-1/2}(K-k)\le \left(S(T_K)-S(T_k)\right)\cdot \ell\le\left(\ga^{-1/2}+\|\ell\|_\infty\right)(K-k).
\end{equation}
\end{lemma}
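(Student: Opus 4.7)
The plan is to prove the two parts separately, both of which follow from standard facts about random walks and the definition of $T_k$.

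For the i.i.d.\ claim, I would apply the strong Markov property iteratively at each stopping time $T_k$. The key observation is that the post-$T_k$ process $\tilde S^{(k)}(n):=S(T_k+n)-S(T_k)$ is, by the strong Markov property and translation invariance of the simple random walk, a simple random walk starting at $0$ that is independent of the sigma-algebra $\mathcal{F}_{T_k}$. Moreover, from the definition (\ref{T}) one has $T_{k+1}-T_k=\inf\{n>0\mid \tilde S^{(k)}(n)\cdot\ell\ge \ga^{-1/2}\}$, which is a measurable functional only of $\tilde S^{(k)}$, and so is the whole excursion vector $(S(n)-S(T_k))_{T_k<n\le T_{k+1}}=(\tilde S^{(k)}(n))_{0<n\le T_{k+1}-T_k}$. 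Thus the excursion depends on the post-$T_k$ increments in a way that does not involve $k$, so iterating the strong Markov property gives that the excursions are independent with a common distribution.

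For the two-sided bound, everything reduces to controlling a single excursion $(S(T_k),S(T_{k+1}))$ and then telescoping. The lower bound is immediate from (\ref{T}): by definition $S(T_{k+1})\cdot\ell\ge S(T_k)\cdot\ell+\ga^{-1/2}$, and summing over $k$ gives $(S(T_K)-S(T_k))\cdot\ell\ge\ga^{-1/2}(K-k)$. For the upper bound, note that at time $T_{k+1}-1$ the crossing has not yet occurred, so $S(T_{k+1}-1)\cdot\ell< S(T_k)\cdot\ell+\ga^{-1/2}$. Since $S(T_{k+1})-S(T_{k+1}-1)$ is a unit coordinate vector $\pm e_i$, we have $|(S(T_{k+1})-S(T_{k+1}-1))\cdot\ell|\le\|\ell\|_\infty$, hence $S(T_{k+1})\cdot\ell\le S(T_k)\cdot\ell+\ga^{-1/2}+\|\ell\|_\infty$. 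Telescoping this inequality yields the stated upper bound.

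Neither part should present any real obstacle: the statement is essentially a bookkeeping consequence of the strong Markov property together with the fact that the walk jumps by one coordinate at each step. The only thing to be careful about is making the ``functional of the post-$T_k$ walk'' argument precise so that measurability and the identical distribution across different $k$ are both clearly justified; once that is written out cleanly, both claims follow routinely.
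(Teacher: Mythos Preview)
Your proposal is correct and follows essentially the same approach as the paper: the i.i.d.\ claim is obtained from the strong Markov property, and the two-sided bound~(\ref{fra}) is proved by induction (equivalently, telescoping) over $K$, using for the upper bound that the walk is nearest-neighbor. Your write-up simply spells out in more detail what the paper states in two sentences.
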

\begin{proof}
The first statement follows from the strong Markov property, see also Figure \ref{de}.
\begin{figure}[t]
\hspace*{-40mm}
\psfrag{0}{0}
 \psfrag{x1}{$S(T_1)$}
 \psfrag{x2}{$S(T_2)$}
\psfrag{x3}{$S(T_3)$}
\psfrag{ga}{$\ga^{-1/2}$}
\epsfig{figure=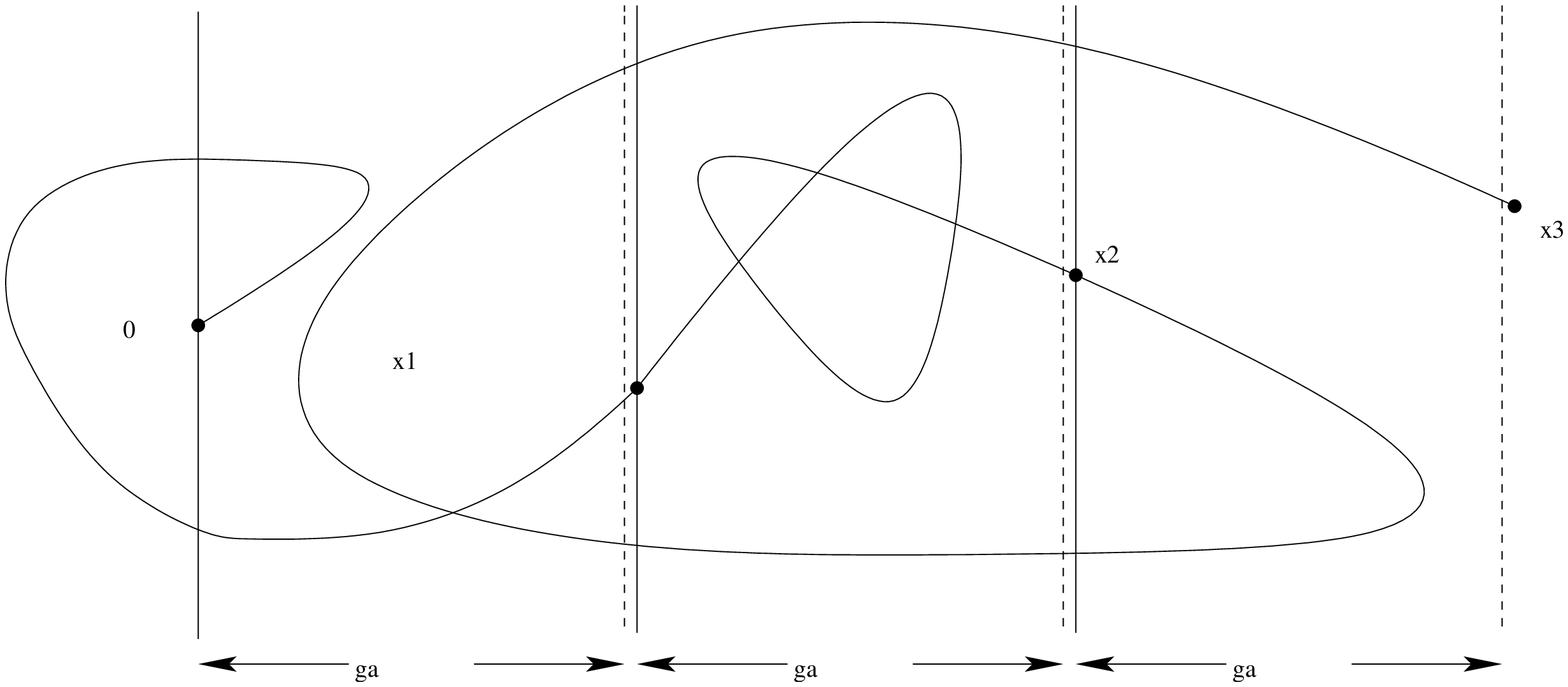,height=85pt}
\hspace*{5mm}
 \psfrag{AA}{$(S(n)-S(T_k))_{T_0\le n\le T_{1}}$}
 \psfrag{BB}{$(S(n)-S(T_k))_{T_1\le n\le T_{2}}$}
 \psfrag{CC}{$(S(n)-S(T_k))_{T_2\le n\le T_{3}}$}
\epsfig{figure=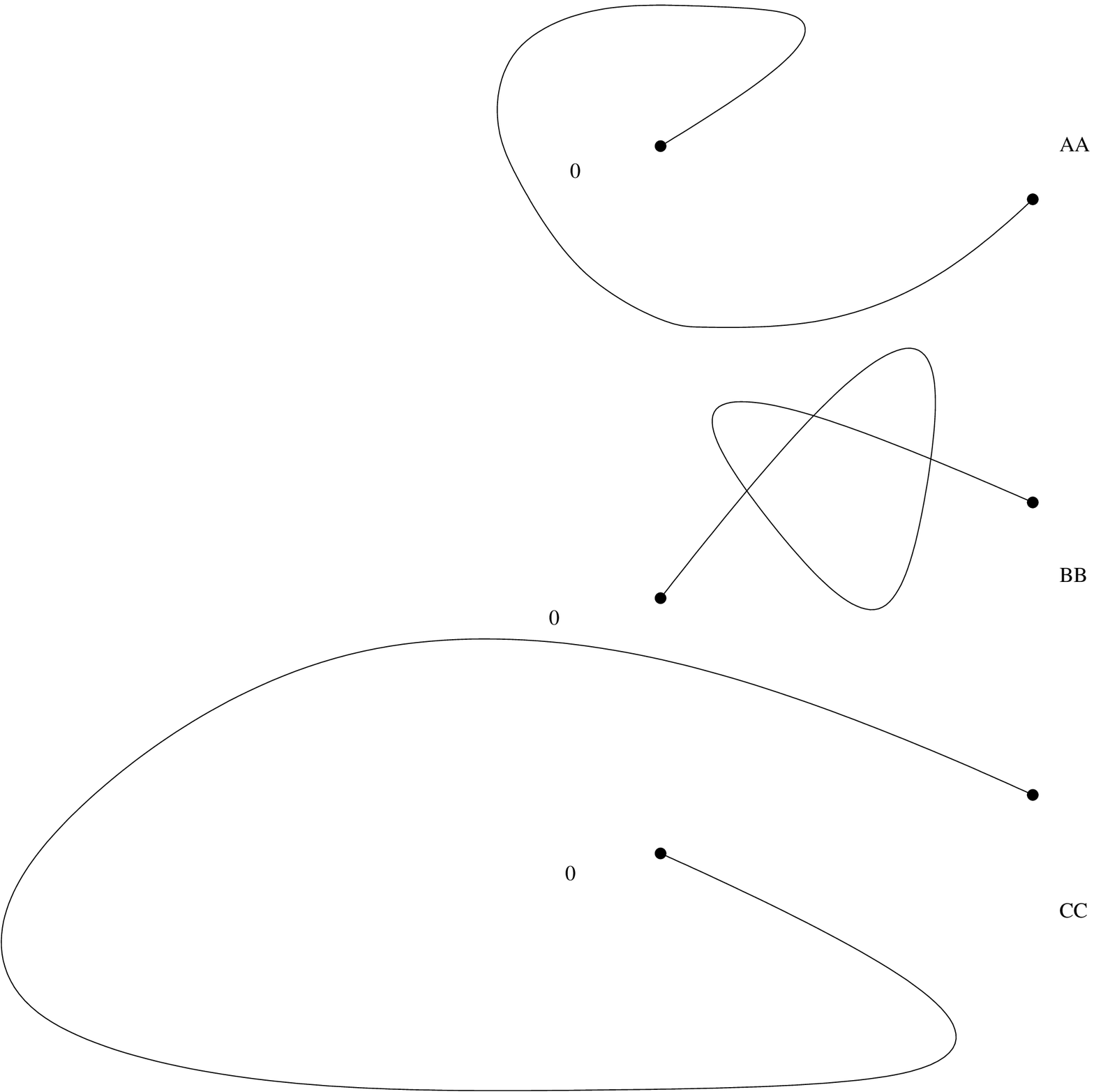,height=85pt}\vspace*{-0mm}
\caption{\footnotesize Decomposing the path $(S(n))_{n\ge 0}$ up to time $T_3$ into three pieces, which are i.i.d.\ after a shift. The boundary of the slabs of width $\ga^{-1/2}$ is solid on the left and dashed on the right. Due to lattice effects there is in general a small gap between neighboring slabs.}\label{de}
\end{figure}
The bounds in  (\ref{fra}) follow by induction over $K$, where we use for the upper bound that $(S(n))_{n\ge 0}$ is a nearest neighbor walk.
\end{proof}

The following lemma
explains the factor $\sqrt{2d}$ in Theorems \ref{cor} to  \ref{new}.
\begin{lemma}\label{scale}
Let $\ell\in\R^d\backslash\{0\}$. Then $\ga  T_1(\ga, \ell)$ converges in distribution as $\ga\searrow 0$ to $dT/(\ell\cdot \ell)$,
where $T$ is  the hitting time of 1 for
a one-dimensional standard Brownian motion.
Therefore, for all  $c>0$,
\begin{equation}\label{wind}
\lim_{\ga\searrow 0}E\left[e^{-c \ga  T_1(\ga, \ell)}\right]\ =\ E\left[e^{-cd\, T/(\ell\cdot \ell)}\right]=e^{-\sqrt{2dc}/\|\ell\|_2}.
\end{equation}
\end{lemma}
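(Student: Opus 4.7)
The plan is to combine Donsker's invariance principle with a continuous-mapping argument for hitting times, and then to read off the explicit constant from the classical Laplace transform of a Brownian first-passage time.

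Setting $N:=1/\ga$, define the rescaled process $X^{(N)}(t):=N^{-1/2}\,S(\lfloor Nt\rfloor)\cdot \ell$. Since each coordinate of $S$ is a symmetric walk with per-step variance $1/d$, Donsker's theorem yields $X^{(N)}\Rightarrow \si W$ in the Skorokhod space $D([0,\infty),\R)$ as $\ga\searrow 0$, where $W$ is a standard one-dimensional Brownian motion and $\si:=\|\ell\|_2/\sqrt d$. From the definition of $T_1$ and a direct comparison using $\lfloor Nt\rfloor$,
\[
\ga T_1(\ga,\ell) - \ga\ \le\ \inf\{t>0:\, X^{(N)}(t)\ge 1\}\ \le\ \ga T_1(\ga,\ell),
\]
so $\ga T_1$ and this hitting time share the same distributional limits. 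Continuous mapping for the hitting-time functional at level $1$ then gives $\ga T_1(\ga,\ell)\Rightarrow \inf\{t\ge 0:\, \si W(t)\ge 1\}$, and Brownian scaling identifies this in law with $\si^{-2}T=dT/(\ell\cdot\ell)$, proving the first assertion of the lemma.

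The Laplace convergence is then immediate from the boundedness and continuity of $z\mapsto e^{-cz}$ on $[0,\infty)$:
\[
\lim_{\ga\searrow 0}E\!\left[e^{-c\ga T_1(\ga,\ell)}\right]\ =\ E\!\left[e^{-cdT/(\ell\cdot\ell)}\right]\ =\ e^{-\sqrt{2dc}/\|\ell\|_2},
\]
where the closed form uses the classical identity $E[e^{-\la T}]=e^{-\sqrt{2\la}}$ applied with $\la=cd/(\ell\cdot\ell)$.

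The main subtlety lies in the continuous-mapping step, because the hitting-time functional is not continuous on the whole Skorokhod space. I would handle it by sandwiching $\ga T_1$ between the first times $X^{(N)}$ exceeds $1\pm\eps$: Donsker applied to each bound together with the fact that $\si W$ almost surely has no interval of constancy at any positive level implies that the hitting times of $1\pm\eps$ by $\si W$ converge a.s.\ to that of $1$ as $\eps\searrow 0$, yielding the required convergence by a standard diagonal argument. The remaining ingredients---the coordinatewise step variance $1/d$, Brownian scaling, and the explicit Laplace transform of $T$---are classical.
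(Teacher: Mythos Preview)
Your proof is correct and follows essentially the same route as the paper: Donsker's invariance principle applied to the one-dimensional projection $S(n)\cdot\ell$, identification of the limiting hitting time via Brownian scaling, and the classical Laplace transform $E[e^{-\lambda T}]=e^{-\sqrt{2\lambda}}$. The only cosmetic differences are that the paper first normalizes the walk to have unit-variance increments (so the level tends to infinity rather than the diffusion coefficient being $\sigma=\|\ell\|_2/\sqrt{d}$), and that the paper outsources the continuous-mapping step for hitting times to \cite[Example~7.6.6]{durr} whereas you spell out the $\varepsilon$-sandwich argument explicitly.
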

\begin{proof}
Observe that 
\[X_n:=\sqrt{\frac{d}{\ell\cdot \ell}}\, S(n)\cdot \ell\]
defines a random walk $(X_n)_{n\ge 0}$ on $\R$, whose increments have mean 0 and variance 1. 
After rewriting  $T_1(\ga,\ell)$ as $
\inf\{n>0\mid X_n\ge \sqrt{d/(\ga \ell\cdot \ell)}\}$
the first statement follows from Donsker's invariance principle as explained 
e.g.\ in \cite[Example 7.6.6]{durr}. This immediately implies the first equality in (\ref{wind}).
The second one follows from the  
explicit expression for the Laplace transform of $T$, 
see e.g.\ \cite[(7.4.4)]{durr}.
\end{proof}
\section{Proof of the upper bound}\label{upper}
By Jensen's inequality, $V_\ga(0)$ is more variable than the constant $\EE[V_\ga(0)]$. 
Consequently, by Proposition \ref{zp},
\[\frac{\al_{V_\ga(0)}}{\sqrt{\ga}}\le \frac{\al_{\EE[V_\ga(0)]}}{\sqrt{\EE[V_\ga(0)]}}\ \sqrt{\frac{\EE[V_\ga(0)]}{\ga}}.\]
 For the proof of Theorem \ref{upp} it therefore suffices to show that 
\begin{equation}\label{up2}
\limsup_{\ga\searrow 0}\frac{\al_{\ga}(\ell)}{\sqrt{\ga}}\le
\sqrt{2d}\ \|\ell\|_2\quad\mbox{for all $\ell\in\R^d\backslash\{0\}$.}
\end{equation}
Observe that (\ref{up2}) is a statement about simple symmetric random walk only, without any reference to a random environment. 
One could prove (\ref{up2}) analytically by using
\cite[Theorem 21]{Ze98}, which states that
for all $\ga>0$ and all $\ell=(\ell_1,\ldots,\ell_d)\in\R^d\backslash\{0\}$,
\begin{equation}\label{form}
\al_\ga(\ell)=\sum_{i=1}^d \ell_i\, {\rm arsinh\, }(\ell_is),\quad\mbox{where $s>0$ solves}\quad
e^\ga d=\sum_{i=1}^d\sqrt{1+(\ell_is)^2}\, .
\end{equation}
However, since the proof of (\ref{form}) given in \cite{Ze98} is quite involved, we shall provide
an alternative proof of (\ref{up2}), which does not use (\ref{form}).
For this purpose, we  consider for constant potential $\ga>0$  the so-called point-to-hyperplane 
Lyapunov exponents
\begin{eqnarray}\label{bar}
\overline \al_\ga(\ell)&:=&\limsup_{k\to\infty}\frac{-1}{k}\ln 
E\left[\exp\left(-\ga\overline H(k\ell)\right)\right],\quad\mbox{where}\\
\overline H(\ell)&:=&\inf\{n\ge 0\mid S(n)\cdot \ell\ge \ell\cdot \ell\}\label{lara}
\end{eqnarray}
is the first time at which the random walk crosses the hyperplane that contains $\ell$ and is perpendicular to $\ell$.  Point-to-hyperplane exponents have been considered for constant potentials in the more general setting of random walks in random environments (RWRE) in \cite{Ze00}. For  random walks among random potentials they have been investigated in \cite{Fl07} and \cite{Zy09}. We shall show that
\begin{equation}\label{bart}
\lim_{\ga\searrow 0}\frac{\overline \al_\ga(\ell)}{\sqrt{\ga}}= \sqrt{2d}\ \|\ell\|_2\quad\mbox{for all $\ell\in\R^d\backslash\{0\}$.}
\end{equation}
By stopping the exponential martingale $\exp\left(\ga S(n)\cdot \ell-nf_\ell(\ga)\right)$ at time $\overline H(k\ell)$, where $f_\ell(\ga):= \ln E\left[e^{\ga S(1)\cdot \ell}\right],$ one could, in fact, show that $\overline \al_\ga(\ell)=f_\ell^{-1}(\ga) \ell\cdot \ell$ and deduce (\ref{bart}) from this. Instead we present in the following a different approach which uses the tools provided in Section \ref{tools}.
Fix $\ell\in\R^d\backslash\{0\}$ and set 
\begin{equation}\label{unizh}
m_k(\ga):=\Big\lfloor\frac{k\ell\cdot \ell}{\ga^{-1/2}+\|\ell\|_\infty}\Big\rfloor\quad\mbox{and}\quad 
M_k(\ga):=\big\lceil (k \ell\cdot \ell+\|\ell\|_\infty)\sqrt{\ga}\big\rceil
\end{equation}
for $k\in\N$ and $\ga>0$. Then for all $k$ and $\ga$,
\begin{eqnarray*}
S(\overline H(k\ell))\cdot \ell&\stackrel{(\ref{lara})}{\ge}& k\ell\cdot \ell\ \ge\ m_k(\ga)(\ga^{-1/2}+\|\ell\|_\infty)\stackrel{(\ref{fra})}{\ge}
S(T_{m_k(\ga)})\cdot \ell\quad\mbox{and}\\ \nonumber
S(\overline H(k\ell))\cdot \ell&=&
S(\overline H(k\ell)-1)\cdot \ell+(S(\overline H(k\ell))-S(\overline H(k\ell)-1))\cdot \ell\\
&\stackrel{(\ref{lara})}{\le}& k\ell\cdot \ell+\|\ell\|_\infty\ \le\  M_k(\ga)\ga^{-1/2}\ \stackrel{(\ref{fra})}{\le}\ S(T_{M_k(\ga)})\cdot \ell.\nonumber
\end{eqnarray*}
Hence,
\begin{equation}\label{epfl}
T_{m_k(\ga)}  \le\overline H(k\ell)\le T_{M_k(\ga)}.
\end{equation}
For any $m$ one can represent $T_m$ as telescopic sum $\sum_{i=1}^m T_i-T_{i-1}$ of random variables which measure the length of the sequence $(S(n)-S(T_{i-1}))_{T_{i-1}<n\le T_i}$ and are therefore i.i.d.\
due to Lemma \ref{s}.
We obtain from (\ref{epfl})
\[\left(E\left[e^{-\ga T_1}\right]\right)^{m_k(\ga)}\ge
E\left[\exp\left(-\ga\overline H(k\ell)\right)\right]\ge \left(E\left[e^{-\ga T_1}\right]\right)^{M_k(\ga)}.\]
Substituting this into definition (\ref{bar}) yields
\[\frac{-\ell\cdot \ell}{\ga^{-1/2}+\|\ell\|_\infty}\ln E[e^{-\ga T_1}]
\le \overline \al_\ga(\ell)\le -\sqrt{\ga} \ell\cdot \ell\ln E[e^{-\ga T_1}].\] 
The statement 
 (\ref{bart}) now follows from 
 Lemma \ref{scale}.

In order to derive from this our goal (\ref{up2}) we need to relate $\overline \al_\ga$ and $\al_\ga$.
Applying \cite[Lemma 2]{Ze00} to the simple symmetric random walk yields
\begin{equation}\label{inf}
\overline\al_\ga(\ell)=\inf\{\al_\ga(x)\mid x\in\R^d, x\cdot \ell\ge \ell\cdot \ell\}.
\end{equation}
(Here our $\overline \al_\la(\ell)$ and $\overline H(k\ell)$ correspond to $\ga_\la(\ell/(\ell\cdot\ell))$
and $T_k(\ell/(\ell\cdot\ell))$, respectively, in the notation used in \cite{Ze00}. See also \cite[Corollary C]{Fl07}, where $\overline \al_\ga$ is expressed in terms of the dual norm of 
 $\al_\ga$ and \cite[Proposition 2.2]{Zy09}.)
Since $\al_\ga$ is a norm, $\al_\ga(x)\to\infty$ as $\|x\|_2\to\infty$. Therefore, the infimum in (\ref{inf}) is attained, i.e.\ for all $\ell\ne 0$ and $\ga>0$
there is some $x(\ga,\ell)\ne 0$ such that
\begin{equation}\label{nyu}
x(\ga,\ell)\cdot \ell=\ell\cdot \ell\qquad\mbox{and}\qquad\overline \al_\ga(\ell)=\al_\ga(x(\ga,\ell)).
\end{equation}
For example, if we denote by $e_1,\ldots,e_d$ the canonical basic vectors of $\Z^d$, then $x(\ga,e_1)$ can be chosen so that
\begin{equation}\label{eg}
x(\ga,e_1)=e_1,\quad \mbox{i.e.}\quad \overline\al_\ga(e_1)=\al_\ga(e_1).
\end{equation}
Indeed, since $\al_\ga$ is invariant under the reflection $x=(x_1,\ldots,x_d)\mapsto 2x_1e_1-x$ 
we have
\[\al_\ga(e_1)= \frac{\al_\ga(2e_1)}{2}\le \frac{\al_\ga(2e_1-x(\ga,e_1))+\al_\ga(x(\ga,e_1))}{2}\stackrel{(\ref{nyu})}{=}
\alpha_\gamma (x(\ga,e_1))\stackrel{(\ref{nyu})}{=}\overline \al_\ga(e_1).
\]
Using the norm and invariance properties of $\al_\ga$ again we obtain from this example
\begin{eqnarray}\nonumber
\limsup_{\ga\searrow 0}\sup_{\|x\|_2=1}\frac{\al_\ga(x)}{\sqrt{\ga}}
&\le&  
\limsup_{\ga\searrow 0}\sup_{\|x\|_2=1}\frac{\sum_{i=1}^d|x_i|\al_\ga(e_i)}{\sqrt{\ga}}\\
& =& \label{oma}
\limsup_{\ga\searrow 0}\frac{\al_\ga(e_1)}{\sqrt{\ga}}\sup_{\|x\|_2=1}\|x\|_1\\
&\stackrel{(\ref{eg})}{=}& 
\limsup_{\ga\searrow 0}\frac{\overline \al_\ga(e_1)}{\sqrt{\ga}}\sup_{\|x\|_2=1}\|x\|_1
 \ \stackrel{(\ref{bart})}{=}\ \nonumber
\sqrt{2d}\, \sup_{\|x\|_2=1}\|x\|_1\ <\ \infty.
\end{eqnarray}
Except for the constant $\sup_{\|x\|_2=1}\|x\|_1$ this already gives the correct behavior claimed in (\ref{up2}). To get the
right constant we 
next  show that 
\begin{equation}\label{kmz}
\lim_{\ga\searrow 0}x(\ga,\ell)=\ell\quad\mbox{for all $\ell\in\R^d\backslash\{0\}$}.
\end{equation}
Assume that there are $\ell\in\R^d\backslash\{0\}$, $\eps>0$, and a sequence $(\ga_n)_{n\ge 0}$ tending to 0 such that $\|x_n-\ell\|_2\ge\eps$
for all $n$, where $x_n:=x(\ga_n,\ell)$. Due to compactness  we may assume  without loss of generality that $x_n/\|x_n\|_2$ converges to some $z$ 
as $n\to\infty$. Then
\begin{eqnarray*}
\sqrt{2d} &\stackrel{(\ref{bart})}{=}& \lim_{n\to\infty}\frac{\overline\al_{\ga_n}(z)}{\sqrt{\ga_n}}
\ \stackrel{(\ref{inf})}{\le}\ \liminf_{n\to\infty}\frac{\al_{\ga_n}(z)}{\sqrt{\ga_n}}\\  
&\le&   \liminf_{n\to\infty}\frac{\al_{\ga_n}(x_n)}{\sqrt{\ga_n}\,\|x_n\|_2}+  \frac{1}{\sqrt{\ga_n}}\al_{\ga_n}\left(z-\frac{x_n}{\|x_n\|_2}\right)\\
&\stackrel{(\ref{nyu})}{\le}& \liminf_{n\to\infty}
\frac{\overline\al_{\ga_n}(\ell)}{\sqrt{\ga_n}\,\|x_n\|_2}+
\left\|z-\frac{x_n}{\|x_n\|_2}\right\|_2\sup_{\|y\|_2=1}\frac{\al_{\ga_n}(y)}{\sqrt{\ga_n}}\\
&\stackrel{(\ref{bart}),(\ref{oma})}{=}&\sqrt{2d}\,\|\ell\|_2\liminf_{n\to\infty}
\frac{1}{\|x_n\|_2}+0\ \le\ \sqrt{2d}\,\frac{\|\ell\|_2}{\sqrt{\|\ell\|_2^2+\eps^2}}<\sqrt{2d},
\end{eqnarray*}
where we used in the second to last step that $\|x_n-\ell\|_2\ge \eps$, $x_n\cdot \ell=\ell\cdot \ell$ and 
the Pythagorean theorem. This gives the desired contradiction and proves (\ref{kmz}). Now,
\begin{eqnarray*}
\al_\ga(\ell)&=& \al_\ga(x(\ga,\ell)) +\al_\ga(\ell)-\al_\ga(x(\ga,\ell)) \ \stackrel{(\ref{nyu})}{\le}\ 
\overline\al_\ga(\ell)+\al_\ga(\ell-x(\ga,\ell))\\
&\le&\overline\al_\ga(\ell)+\|\ell-x(\ga,\ell)\|_2 \sup_{\|y\|_2=1}\al_\ga(y).
\end{eqnarray*}
Therefore,
\[\limsup_{\ga\searrow 0}\frac{\al_\ga(\ell)}{\sqrt{\ga}}\le \limsup_{\ga\searrow 0}\frac{\overline\al_\ga(\ell)}{\sqrt{\ga}}+\|\ell-x(\ga,\ell)\|_2 \sup_{\|y\|_2=1}\frac{\al_\ga(y)}{\sqrt{\ga}}\ =\ \sqrt{2d}\, \|\ell\|_2
\]
due to (\ref{bart}), (\ref{oma}) and (\ref{kmz}). This completes the proof of (\ref{up2}).
\section{Proof of the lower bound}\label{lower}
We first argue why it is enough to prove (\ref{cnn}) for $\ell\in\Z^d$. 
Homogeneity of the norms $\beta_{V_\ga}$ implies that (\ref{cnn}) then also holds for $\ell\in\Q^d$. Now let $\ell\in\R^d\backslash\{0\}$ be arbitrary and $\eps>0$. 
There are isometries $f_0,\ldots,f_{d}$ of $\Z^d$, one of which being the identity, which preserve the origin and for which the convex hull of $\{f_i(\ell)\mid i=0,\ldots,d\}$ has interior points. Since this hull is a polytope there  are
coefficients  $a_0,\ldots,a_d\in[0,1]$ summing up to 1 which define an  $x:=\sum_{i=0}^{d}a_if_i(\ell)$ such that $x \in\Q^d$ and $\|x-\ell\|_2<\eps$.
Therefore,
\begin{eqnarray*}
\sqrt{2d\,\EE[V]}(\|\ell\|_2-\eps)&\le&\sqrt{2d\,\EE[V]}\|x\|_2 
\ \le\ \liminf_{\ga\searrow 0}\frac{\beta_{V_\ga}(x)}{\sqrt{\ga}}\\
&\le& \liminf_{\ga\searrow 0}\frac{\sum_{i=0}^da_i\beta_{V_\ga}(f_i(\ell))}{\sqrt{\ga}}
\ =\ \liminf_{\ga\searrow 0}\frac{\beta_{V_\ga}(\ell)}{\sqrt{\ga}}.
\end{eqnarray*}
Letting $\eps\searrow 0$ yields the claim (\ref{cnn}).

It remains to prove (\ref{cnn}) for $\ell\in\Z^d$.
Without loss of generality we may assume $\EE[V]>0$.
For $0\le k<n$ and $x\in\Z^d$ we denote by 
\[\ell_k^n(x):=\#\{m\in\N_0\mid k\le m<n,\ S(m)=x\}\]
the local time of the random walk in $x$ between times $k$ and $n$. We use these local times to rewrite the
definition of the annealed exponent $\beta_{V_\ga}$, cf.\ \cite[p.\ 597, 598]{Fl07}. We have
\[\EE\left[E\left[\exp\left(-\sum_{m=0}^{H(k\ell)-1}V_\ga (S(m))\right)\right]\right]
=E\left[\EE\left[\exp\left(-\sum_{x\in\Z^d}V_\ga(x) \ell_0^{H(k\ell)}(x)\right)\right]\right].\]
Using that $(V(x))_{x\in\Z^d}$ is i.i.d.\ under $\PP$ we obtain that the last expression is equal to
\[E\left[\,\prod_{x\in\Z^d}\EE\left[\exp\left(-V_\ga(x)\ell_0^{H(k\ell)}(x)\right)\right]\right]
=E\left[\exp\left(-\sum_{x\in\Z^d}\La_\ga\left(\ga\ell_0^{H(k\ell)}(x)\right)\right)\right],\]
where
\[\La_\ga(\la):=-\ln\EE\left[e^{-\la V_\ga(0)/\ga}\right]\qquad (0\le\la<\infty).\]
Therefore, $\beta_{V_\ga}(\ell)$, defined in (\ref{be}), can be expressed as
\begin{equation}\label{can}
\beta_{V_\ga}(\ell)=\lim_{k\to\infty}\frac{-1}{k}\ln 
E\left[\exp\left(-\sum_{x\in\Z^d}\La_\ga\left(\ga\ell_0^{H(k\ell)}(x)\right)\right)\right].
\end{equation}
In view of this representation of $\beta_{V_\ga}$ the claim (\ref{cnn})
is, given the functions $\La_\ga$, 
a statement only about the simple symmetric random walk, without any reference to a random environment. For the proof of (\ref{cnn}) define the function
$\La(\la):=
-\ln\EE\left[e^{-\la V}\right]$ and fix $0<t<\EE[V]$. Since $\La(0)=0$ and $\La'(0)=\EE[V]$ there is
$\la_0>0$ such that $\La(\la_0)/\la_0>t$, see Figure \ref{ff}.
\begin{figure}[t]
\begin{center}
 \psfrag{la}{$\la$}
 \psfrag{La}{$\La(\la)$}
 \psfrag{la0}{$\la_0$}
\epsfig{figure=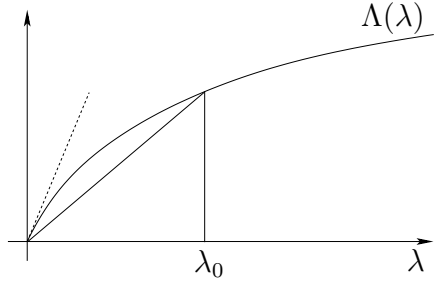,height=100pt}\vspace*{-0mm}
\end{center}
\caption{\footnotesize  The  dashed tangent line has slope $\La'(0)=\EE[V(0)].$ For each $t<\EE[V(0)]$
we can choose $\la_0$ such that the slope of the secant is at least $t$.}\label{ff}
\end{figure}
By weak convergence, $\La_\ga(\la_0)$ converges to $\La(\la_0)$ as $\ga\searrow 0$.
Therefore, there is $\ga_0>0$ such that  
\begin{equation}\label{ias}
\frac{\La_\ga(\la_0)}{\la_0}> t\qquad \mbox{for all $0<\ga\le\ga_0$}.
\end{equation}
However, each $\La_\ga$ is concave  because of $\La_\ga''(\la)=-{\rm Var}(V_\ga(0)/\ga)\le 0,$
where the variance is taken with respect to the normalized measure 
$e^{-\la V_\ga(0)/\ga}\, d\PP$.
Therefore, since $\La_\ga(0)=0$, the function $\La_\ga(\la)/\la$ decreases in $\la$. Together with (\ref{ias}) this implies that
\begin{equation}\label{iad}
\frac{\La_\ga(\la)}{\la}>t\qquad\mbox{ for all $0<\la\le \la_0$ and $0<\ga\le\ga_0$.}
\end{equation}
Now let 
\begin{equation}\label{sued}
0<\ga\le\min\{1,\ga_0,(\la_0/3)^8\}
\end{equation}
and define $m_k=m_k(\ga)$ for $k\in\N_0$  as in (\ref{unizh}). Then by (\ref{epfl}), 
$H(k\ell)\ge\overline H(k\ell)\ge T_{m_k}(\ga,\ell)$.
Therefore, by (\ref{can}),
\begin{equation}\label{sun}
\beta_{V_\ga}(\ell)\ge \limsup_{k\to\infty}\frac{-1}{k}\ln 
E\left[\exp\left(-\sum_{x\in\Z^d}\La_\ga\left(\ga\ell_0^{T_{m_k}}(x)\right)\right)\right].
\end{equation}
For $x\in\Z^d$ set $i_x:=\min\{i\ge 0\mid x\cdot \ell<S(T_i)\cdot \ell\}.$
We expand $\ell_0^{T_{m_k}}(x)$ as a telescopic sum and then omit some  of the summands and truncate the 
remaining ones to obtain for all $k\ge 0$ and $x\in\Z^d$,
\begin{equation}\label{most}
\ga\,\ell_0^{T_{m_k}}(x)\ =\ \ga\sum_{i=1}^{m_k}\ell_{T_{i-1}}^{T_i}(x)\ \ge \ 
\ga\sum_{i=1\vee i_x}^{m_k\wedge \lceil i_x+\ga^{-1/8}\rceil}\left(\ell_{T_{i-1}}^{T_i}(x)\wedge \ga^{-3/4}\right) .
\end{equation}
The right most side of (\ref{most}) is less than or equal to
\[\ga (\ga^{-1/8}+2)\ga^{-3/4}=\ga^{1/8}+2\ga^{1/4}\le3\ga^{1/8}\stackrel{(\ref{sued})}{\le}\la_0.\]
Hence (\ref{iad}) can be applied, which shows that  for all $k\ge 0$ and $x\in\Z^d$,
\[\La_\ga\left(\ga\,\ell_0^{T_{m_k}}(x)\right)\ge t
\ga\sum_{i=1\vee i_x}^{m_k\wedge \lceil i_x+\ga^{-1/8}\rceil}\left(\ell_{T_{i-1}}^{T_i}(x)\wedge \ga^{-3/4}\right) .\]
Summing over $x\in\Z^d$, changing the order of summation and omitting some more summands gives for all $k\ge 0$,
\begin{eqnarray}\label{Y}
\sum_{x\in\Z^d}\La_\ga\left(\ga\,\ell_0^{T_{m_k}}(x)\right)&\ge& t
\ga\sum_{i=1}^{m_k}Y_i\, ,\qquad\mbox{where}\\
\nonumber
Y_i&:=&\sum_{x\in \mathcal S_i}
\left(\ell_{T_{i-1}}^{T_i}(x)\wedge \ga^{-3/4}\right)\qquad\mbox{and}\\
\nonumber
\mathcal S_i&:=&\{x\in\Z^d\mid -\ga^{-5/8}\le (x-S(T_{i-1}))\cdot \ell<\ga^{-1/2}\}.
\end{eqnarray}
\begin{figure}[t]
\psfrag{0}{0}
 \psfrag{a}{$\ell_{T_{i-1}}^{T_i}(x)$}
 \psfrag{d}{$\ga^{-3/4}$}
\psfrag{c}{$S(T_{i-1})$}
\psfrag{b}{$S(T_i)$}
\psfrag{e}{$\ga^{-5/8}$}
\psfrag{f}{$\ga^{-1/2}$}
\psfrag{x}{$x$}
\epsfig{figure=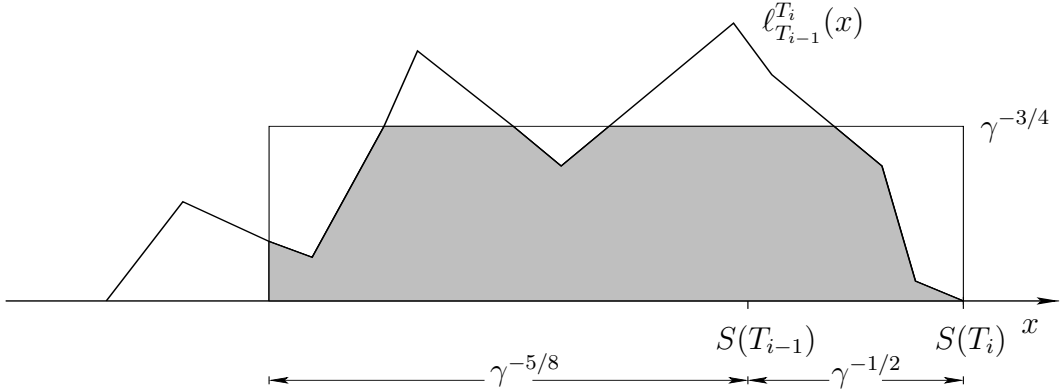,height=140pt, width=400pt}
\caption{\footnotesize The function  $\ell_{T_{i-1}}^{T_i}(x)$ is sketched as a zigzag graph. 
The box is the indicator function, multiplied by $\ga^{-3/4}$, of the set $\mathcal S_i$.
The size of the shaded area is $Y_i$. Typically, for small $\ga>0$, unlike in the figure,
the zigzag graph fits into the box, in which case $Y_i=T_i-T_{i-1}$.}\label{box}
\end{figure}
Indeed, if $1\le i\le {m_k}$ and $x\in\mathcal S_i$ then on the one hand
\[x\cdot \ell<S(T_{i-1})\cdot \ell +\ga^{-1/2}\stackrel{(\ref{fra})}{\le}S(T_{i-1})\cdot \ell +\left(S(T_i)-S(T_{i-1})\right)\cdot \ell
=S(T_i)\cdot \ell\]
and therefore $i_x\le i$ and on the other hand
\begin{eqnarray*}
x\cdot \ell&\ge& S(T_{i-1})\cdot \ell-\ga^{-5/8}\\
&=&S(T_{i-1-\lceil \ga^{-1/8}\rceil})\cdot \ell+\left(S(T_{i-1})- S(T_{i-1-\lceil \ga^{-1/8}\rceil})\right)\cdot \ell- \ga^{-5/8}\\
&\stackrel{(\ref{fra})}{\ge}&S(T_{i-1-\lceil \ga^{-1/8}\rceil})\cdot \ell
\end{eqnarray*}
and therefore $i_x\ge i-\lceil \ga^{-1/8}\rceil$, i.e.\ $i\le \lceil i_x+\ga^{-1/8}\rceil$, which concludes the proof of (\ref{Y}). For an illustration in the one-dimensional case see Figure \ref{box}.

Now the key observation is  that there is a function $f:\bigcup_{j\in\N}(\Z^d)^j\to\R$ such that 
\[Y_i=f\left((S(n)-S(T_{i-1}))_{T_{i-1}<n
\le T_i}\right)
\]
 for all $i$.  
Therefore, Lemma \ref{s} implies that the sequence $(Y_i)_{i\in\N}$ is i.i.d.\ under $P$.
Consequently, it follows from (\ref{sun}),  (\ref{Y}) and (\ref{unizh}) that
\begin{equation}\label{sinn}
\beta_{V_\ga}(\ell)\ge \frac{-\ell\cdot \ell}{\ga^{-1/2}+\|\ell\|_\infty}\ln E\left[\exp\left(-t\ga Y_1\right)\right].
\end{equation}
Observe that $Y_1=T_1$ on \[\{S(n)\cdot \ell> -\ga^{-5/8} \text{ for all }n<T_1\}\cap \{\ell_0^{T_1}(x)\le \ga^{-3/4}\text{ for all } x\in\Z^d\}.\] Therefore, we obtain from (\ref{sinn}) by a union bound that
\begin{eqnarray}\nonumber
\frac{\beta_{V_\ga}(\ell)}{\sqrt{\ga}}&\ge& \frac{-\ell\cdot \ell}{1+\sqrt{\ga}\|\ell\|_\infty}\ln \bigg(
E\left[\exp\left(-t\ga\, T_1\right)\right]\\ \label{end2}
&& +\ P\left[\exists n<T_1\ S(n)\cdot \ell\le -\ga^{-5/8}\right]+P\left[\sup_{x\in\Z^d}\ell_0^{T_1}(x)>\ga^{-3/4}\right]\bigg).
\end{eqnarray}
Now we let $\ga\searrow 0$.
It suffices to show that then both terms in line (\ref{end2}) vanish.
Indeed, then the claim of this section, (\ref{cnn}), follows from applying Lemma \ref{scale}  and 
letting $t\nearrow \EE[V]$.

A variation of the gambler's ruin problem shows that the first term in (\ref{end2}) tends to zero.
The second term is less than or equal to
\begin{equation}\label{swr}
P\left[\ga T_1\ge \ga^{-1/4}\right]+P\left[\sup_{x\in\Z^d}\ell_0^{\ga^{-5/4}}(x)>\ga^{-3/4}\right].
\end{equation}
The first statement of Lemma \ref{scale} implies that the first term in (\ref{swr}) vanishes as $\ga\searrow 0$.
Concerning the second term in  (\ref{swr}), 
the literature contains precise and deep statements about the asymptotics of the maximal local time $\sup_{x}\ell_0^{n}(x)$ up to time $n$ as $n$ goes to infinity from which one could see that this term  also tends to 0.
However, this can also be derived by more elementary means as follows.
Note that  the second term in (\ref{swr}) is equal to
\[
P\left[\exists\, i<\ga^{-5/4}:\ \ell_0^{\ga^{-5/4}}(S(i))>\ga^{-3/4}\right]
\le P\left[\exists\, i<\ga^{-5/4}:\ \ell_i^{i+\ga^{-5/4}}(S(i))>\ga^{-3/4}\right].
\]
Using subadditivity and the Markov property, the last expression can be estimated from above by $\lceil\ga^{-5/4}\rceil P\left[\ell_0^{\ga^{-5/4}}(0)>\ga^{-3/4}\right]$. If we denote by 
$r_i\ (i\ge 1)$ the time between the $i$-th and the $(i+1)$-st visit to 0 this bound can be rewritten as 
\begin{equation}\label{snow}
\lceil\ga^{-5/4}\rceil P\bigg[\sum_{i=1}^{\lfloor\ga^{-3/4}\rfloor} r_i<\ga^{-5/4}\bigg]\ \le\ 
\lceil\ga^{-5/4}\rceil P\left[\bigcap_{i=1}^{\lfloor\ga^{-3/4}\rfloor}  \{r_i<\ga^{-5/4}\}\right].
\end{equation}
By the strong Markov property $(r_i)_{i\ge 1}$ is i.i.d.. Hence the right hand side of (\ref{snow}) equals
\begin{equation}\label{is}
\lceil\ga^{-5/4}\rceil \left(1-P\left[ r_1\ge \ga^{-5/4}\right]\right)^{\lfloor\ga^{-3/4}\rfloor}.
\end{equation}
One way for the random walk not to return to its starting point before time $\ga^{-5/4}$  is to reach the hyperplane at distance $\lceil\ga^{-5/8}\rceil$ in direction $e_1$ before returning to the hyperplane $Z:=\{x\mid x\cdot e_1=0\}$  and then to take more than $\ga^{-5/4}$ steps before returning to $Z$. Therefore, using the notation introduced in (\ref{T}),
\begin{equation}\label{per}
P\left[ r_1\ge \ga^{-5/4}\right]\ge P\left[\bigcap_{n=1}^{T_1(\ga^{5/4},e_1)} \{S(n)\cdot e_1>0\}\right] P\left[T_1(\ga^{5/4},e_1)>\ga^{-5/4}\right].
\end{equation}
By the gambler's ruin problem, the first term on the right hand side of (\ref{per}) equals
$1/(2d\lceil\ga^{-5/8}\rceil)$, whereas the second term tends to a constant $c>0$ due to Lemma \ref{scale}.
Consequently, the expression in (\ref{is}) is bounded from above for small $\ga>0$ by 
\[\lceil\ga^{-5/4}\rceil \left(1-c\ga^{5/8}/(3d)\right)^{\ga^{-3/4}}
\ \le\ \lceil\ga^{-5/4}\rceil e^{-(c/(3d))\ga^{5/8}\ga^{-3/4}},\]
which decays to 0 as $\ga\searrow 0$ since $5/8-3/4=-1/8<0$.
\section{Appendix}
\begin{proof}[Proof of Proposition \ref{zp}.]
All the statements of Proposition \ref{zp} are contained in \cite[Proposition 4]{Ze98}
except for (\ref{do2}). For the proof of (\ref{do2}) we recall from \cite[(10)]{Ze98} that
\begin{equation}\label{ppp}
g(0,k\ell,V)=e(0,k\ell,V)\,g(k\ell,k\ell,V),
\end{equation}
where $g(x,y,V)$ is defined like in (\ref{gr}) except that the random walk does not start at 0 but at $x$.
From this it follows, see \cite[(19)]{Ze98}, that 
\begin{equation}\label{emi}
|\ln g(0,k\ell,V)-\ln e(0,k\ell,V)|\le V(k\ell)+\left(V(k\ell)+\ln g(k\ell,k\ell,V)\right).
\end{equation}
Divided by $k$ this converges a.s.\ to 0 as $k\to\infty$ due to $\EE[V(0)]<\infty$ and   \cite[Lemma 5]{Ze98}.
This proves the first equality in (\ref{do2}). For the second identity  in (\ref{do2}) observe that by  (\ref{emi}) and translation invariance,
 \[\left|\EE\left[\ln g(0,k\ell,V)-\ln e(0,k\ell,V)\right]\right|\le
\EE[V(0)]+\EE\left[V(0)+\ln g(0,0,V)\right],\]
which is finite due to \cite[Lemma 5]{Ze98} and thus converges to 0 after division by $k\to\infty$.
\end{proof}
\begin{proof}[Proof of Proposition \ref{db}.]
All the statements of Proposition \ref{db} are contained in \cite[Theorem A (b)]{Fl07}
except for the last one and the second equality in (\ref{be}).
However, the last statement of Proposition \ref{db} is obvious. The second identity in (\ref{be})
is stated in \cite[(1.5),(1.6)]{Zy09} without proof. Since we could not find any proof of this identity in the 
literature we provide one here.

For the inequality $\le$ we observe that choosing $m=0$ in definition (\ref{gr}) gives $g(k\ell,k\ell,V)\ge e^{-V(k\ell)}$. Since $V(k\ell)$ and $e(0,k\ell,V)$ are independent we obtain from (\ref{ppp}) and translation invariance of $\PP$ that
$\EE[g(0,k\ell,V)]\ge \EE[e(0,k\ell,V)]\,\EE\left[e^{-V(0)}\right],$ which gives the desired inequality. 

For the opposite inequality we quote from \cite[(18)]{Ze98} that
by the strong Markov property $e^{V(k\ell)}g(k\ell,k\ell,V)$ can be represented as a geometric series such that
\begin{equation}\label{geo}
g(k\ell,k\ell,V)=e^{-V(k\ell)}\Bigg(1-E_{k\ell}\Bigg[\exp\Bigg(-\sum_{m=0}
^{H_2(k\ell)-1}V(S(m))\Bigg)\won_{H_2(k\ell)<\infty}\Bigg]\Bigg)^{-1},
\end{equation}
where $H_2(k\ell)$ denotes the time of the second visit of $k\ell$ and the subscript $k\ell$ indicates the starting point of  the random walk. Now fix $\eps>0$ such that $\PP[V(0)\ge \eps]\ge \eps$ and define the random set $A:=\{x\in\Z^d\mid V(x)\ge \eps\}$ and its entrance time $H(A)$.
Then the expectation in (\ref{geo}) can be bounded above by
\[
P_{k\ell}[H_2(k\ell)\le H(A)]+e^{-\eps}P_{k\ell}[H_2(k\ell)>H(A)]=
1-(1-e^{-\eps})\,P_{k\ell}[H_2(k\ell)>H(A)].
\]
Substituting this
into (\ref{geo}) gives
$g(k\ell,k\ell,V)\le c\,P_{k\ell}[H_2(k\ell)>H(A)]^{-1}$, where $c:=(1-e^{-\eps})^{-1}<\infty$. By (\ref{ppp}),
\begin{eqnarray}\nonumber
\EE[g(0,k\ell,V)]&\le& c\,\EE\left[e(0,k\ell,V)\,P_{k\ell}[H_2(k\ell)>H(A)]^{-1}\right]\\
&\le &c\,e^{k^{2/3}}\EE\left[e(0,k\ell,V)\right]+c\,\EE\left[P_{0}[H_2(0)>H(A)]^{-1}\,\won_{A_k}\right],
\label{sec}
\end{eqnarray}
where $A_k$ denotes the event that $P_{0}[H_2(0)>H(A)]\le e^{-k^{2/3}}$. As required, the first term in (\ref{sec}) has the same exponential decay rate for $k\to\infty$ as $\EE\left[e(0,k\ell,V)\right]$. Therefore, it suffices to show that the second term in (\ref{sec}) decays even superexponentially fast. For this purpose we denote by $L$
the $|\cdot|_1$-distance between 0 and $A$. For $d=1$, by the gambler's ruin problem,
$P_{0}[H_2(0)>H(A)]$ can be estimated from below by $1/(2(L\vee 1))$. For $d\ge 2$ one can bound this term very roughly from below by $(2d)^{-L}$ by choosing a path of minimal length connecting 0 to $A$.
Therefore,
\begin{equation}\label{am}
\EE\left[P_{0}[H_2(0)>H(A)]^{-1}\,\won_{A_k}\right]\le 
\left\{\begin{array}{ll}
{\displaystyle 2\EE\left[(L\vee 1)\,\won_{2(L\vee 1)\ge \exp(k^{2/3})}\right]}&\mbox{if $d=1,$}\\
{\displaystyle \EE\left[(2d)^{L}\, \won_{(2d)^L\ge \exp(k^{2/3})}\right]}&\mbox{if $d\ge 2$.}
\end{array}\right.
\end{equation}
Since the volume of the discrete $|\cdot|_1$-ball of radius $n$ centered at 0 is less than $c_dn^d$ for some suitable finite constant
$c_d$ depending on the dimension we get $\PP[L\ge n]\le(1-\eps)^{c_dn^d}$. 
Therefore, for $d\ge 2$ the expression on the right hand side of (\ref{am}) is less than 
$\sum_n(2d)^n(1-\eps)^{c_dn^d}$, where $n\ge n_k:=k^{2/3}/\ln(2d)$. For $k$ large enough this is less than
$\sum_n(1-\eps/2)^{c_dn^d}$.  Since $n^d\ge n_k^d+n-n_k$ for $n\ge n_k$ this can be bounded from above by
 $C_d(1-\eps/2)^{c_d n_k^d}$, where $C_d:=\sum_{n\ge 0}(1-\eps/2)^{c_d n}$, and thus decays 
superexponentially fast to zero.
The corresponding proof for $d=1$ is even simpler.
\end{proof}
\vspace*{3mm}

{\bf Acknowledgment:} Part of this work was done during the authors' stay at the Institut Henri Poincare - Centre Emile Borel. The authors thank this institution for
hospitality. E.\ Kosygina's work was partially supported by the PSC-CUNY award
\# 61319-0039 and NSF grant DMS-0825081. 
T.\ Mountford was supported by SNF grant 200020-115964 and by the C.N.R.S..
 M.\ Zerner's work was partially supported by ERC Starting Grant 208417-NCIRW.  

\bibliographystyle{amsalpha}
\vspace*{5mm}
{\sc \small
\begin{tabular}{ll}
Department of Mathematics& \hspace*{20mm}Ecole Polytechnique F\'ed\'erale\\
Baruch College, Box B6-230& \hspace*{20mm}de Lausanne\\
One Bernard Baruch Way&\hspace*{20mm}D\'epartement de math\'ematiques\\
New York, NY 10010, USA&\hspace*{20mm}1015 Lausanne, Switzerland\\
{\verb+elena.kosygina@baruch.cuny.edu+}& \hspace*{20mm}{\verb+thomas.mountford@epfl.ch+}
\end{tabular}\vspace*{2mm}

\begin{center}
\begin{tabular}{l}
Eberhard Karls Universit\"at T\"ubingen\\
Mathematisches Institut\\
Auf der Morgenstelle 10\\
72076 T\"ubingen, Germany\\
{\verb+martin.zerner@uni-tuebingen.de+} 
\end{tabular}
\end{center}
}

\end{document}